\newcommand{\bt}{\begin{teo}}                               
\newcommand{\et}{\end{teo}}                                 
\newcommand{\bco}{\begin{cor}}                               
\newcommand{\eco}{\end{cor}}                                 
\newcommand{\bd}{\begin{defn}}                            
\newcommand{\ed}{\end{defn}}                              
\newcommand{\bl}{\begin{lem}}                                 
\newcommand{\el}{\end{lem}}                                   
\newcommand{\bpr}{\begin{prop}}                  
\newcommand{\epr}{\end{prop}}                    
\newcommand{\bere}{\begin{remark}}                      
\newcommand{\ere}{\end{remark}}                                 
\newcommand{\beq}{\begin{equation}}
\newcommand{\eeq}{\end{equation}}
\def\bal#1\eal{\begin{align}#1\end{align}}                      %
\def\baln#1\ealn{\begin{align*}#1\end{align*}}          
\def\bml#1\eml{\begin{multline}#1\end{multline}}        %
\def\bmln#1\emln{\begin{multline*}#1\end{multline*}}  %
\def\bga#1\ega{\begin{gather}#1\end{gather}}
\def\bgan#1\egan{\begin{gather*}#1\end{gather*}}
\newcommand{\de}{\mathrm{d}}                        
\DeclareMathOperator{\cat}{cat}
\newcommand{\N}{\ensuremath{\mathbb{N}}\xspace}     
\newcommand{\R}{\ensuremath{\mathbb{R}}\xspace}     
\newcommand{\eps}{\varepsilon}                      
\newcommand{\To}{\longrightarrow}                   
\newcommand{\inte}{\int_0^1\!\!}
\newcommand{\gammap}{\ensuremath{\gamma_{\Pi}}\xspace}
\newtheorem{teo}{Theorem}[section]
\newtheorem{lem}[teo]{Lemma}
\newtheorem{defn}[teo]{Definition}
\newtheorem{cor}[teo]{Corollary}
\newtheorem{prop}[teo]{Proposition}
\theoremstyle{remark}
\newtheorem{remark}[teo]{Remark}
\newcommand{\be}{\begin{equation}}
\newcommand{\ee}{\end{equation}}
\newcommand{\gam}{\gamma}
\newcommand{\Om}{\ensuremath{\Omega(p,q;D)}\xspace}
\title[Convex domains of Finsler and Riemannian manifolds]{Convex domains of Finsler and\\Riemannian manifolds}
\author[R. Bartolo]{Rossella Bartolo}
\address{Dipartimento  di Matematica\hfill\break\indent
Politecnico di Bari \hfill\break\indent Via Orabona 4,
70125, Bari, Italy}
\email{rossella@poliba.it}
\author[E. Caponio]{Erasmo Caponio}
\address{Dipartimento di Matematica\hfill\break\indent
Politecnico di Bari \hfill\break\indent Via Orabona 4,
70125, Bari, Italy}
\email{caponio@poliba.it}
\author[A.V. Germinario]{Anna Valeria Germinario}
\address{Dipartimento  di Matematica\hfill\break\indent
Universit\`a degli Studi di Bari\hfill\break\indent Via  Orabona 4,
70125 Bari Italy}
\email{germinar@dm.uniba.it}
\thanks{RB, EC and AVG are supported by M.I.U.R. Research project PRIN07 ``Metodi Variazionali \hfill\break\indent e Topologici nello Studio di Fenomeni Nonlineari''
}
\author[M. S\'anchez]{Miguel S\'anchez}
\address{Departamento de Geometr\'{\i}a y Topolog\'{\i}a \hfill\break\indent
 Facultad de Ciencias, Universidad de Granada \hfill\break\indent
 Campus Fuentenueva s/n, 18071 Granada, Spain}
\email{sanchezm@ugr.es}
\thanks{MS is partially supported by
Spanish MEC-FEDER Grant MTM2007-60731 and
Regional J.\hfill\break\indent Andaluc\'{\i}a Grant P09-FQM-4496.}
\thanks{All the authors are partially supported by the Spanish-Italian \hfill\break\indent  Acci\'on Integrada HI2008.0106/Azione Integrata Italia-Spagna IT09L719F1.}
\subjclass[2000]{53C60, 53C22, 58E10}
\keywords{Finsler manifolds, convex hypersurface, 
geodesics}
\date{}
\begin{document}

\begin{abstract}
A detailed study of the  notions of convexity for a hypersurface  in a Finsler manifold  is
carried out. In particular, the infinitesimal and local notions of
convexity are shown to be equivalent.
Our approach differs from
Bishop's one in his classical result \cite{b} for the Riemannian case. Ours not only can be extended to the Finsler setting but
it also reduces the typical requirements of differentiability for the
metric and it yields consequences on the multiplicity of connecting
geodesics in the convex domain defined by the hypersurface.
\end{abstract}

\maketitle
\section{Introduction}\label{s1}
Convexity is a central concept in different branches of
Mathematics and, thus, it admits different definitions depending
on the used viewpoint. In Riemannian Geometry, there are two
natural definitions for the convexity of a smooth hypersurface
$\partial D$ which bounds a {\em domain} $D$, i.e. a connected open subset: $\partial D$ is
{\em infinitesimally convex} if its second fundamental form, with
respect to the inner normal, is positive semi-definite at any
$p\in\partial D$ and {\em locally convex} if the exponential of the
tangent space $T_p\partial D$, restricted to some neighborhood of
$0$, does not intersect $D$. Infinitesimally convex hypersurfaces
naturally arise from regular values of smooth convex functions.
On the other hand, the domain $D$ is called {\em convex} when each
two $x, y \in D$ can be joined by a non-necessarily unique geodesic which minimizes the
distance in $D$. When the closure $\overline D$ is complete, the
convexity of $D$ must be equivalent to the convexity of its
boundary; in order to prove this claim, an intermediate notion
such as {\em geometric convexity} for $\partial D$  becomes useful
(see the next section for exhaustive definitions and details).

The consistency of the approach relies then on the equivalence
between the infinitesimal and local  notions of convexity. The
fact that the former implies the latter is not as trivial as it
sounds: do Carmo and Warner \cite{dw} prove it when the ambient
Riemannian manifold $(M,g_R)$ has constant curvature and Bishop
\cite{b} in the general situation. Bishop's proof reduces the
problem to dimension 2 and, to this end, a family of surfaces
which sweep out a neighborhood of $p$ is constructed. As a
consequence, a uniform bound for some focal distances in the
family is required, and smoothability $C^4$ is imposed on the
metric $g_R$. This requirement on smoothability seems  strong and
Bishop himself suggests that it may be non-optimal.  Notice also that the
interplay between bounds on curvature, convexity of domains or
functions and smoothability,  becomes a classical topic in
Differential Geometry  \cite{Ba1, Ba2, ChGr, GrWu, Sm}.

Most elements of this Riemannian setting can be transplanted to
the Finslerian one. But, as pointed out by Borisenko and Olin
\cite[Remark 1]{bo}, an important difficulty now appears: Bishop's
technique only works for Berwald spaces, where the Chern connection becomes a linear connection on the tangent bundle (cf. \cite[\S 10]{bcs}). 
In the general case, the relation between
the convexity of the domain and its boundary is not clear and one
is lead to some more strict notions of convexity as a technical
assumption (cf. for example \cite{Ba1, Sm}).
The situation is even worse for
non-reversible Finsler metrics, as there is no a priori a clear
equivalent hypothesis to the completeness of $\overline D$.

The aim of the present article is to give a definitive answer to
these questions, showing the natural equivalence of the different
convexities.

As a preliminary step, in Section \ref{s15} the different notions of
convexity are reviewed, explaining their extensions to the
Finslerian case and checking that, in the non-reversible case, it
is equivalent to assume any notion of convexity for the original
Finsler metric $F$ and its reversed metric
$\tilde F$.

In Section \ref{s2}, the equivalence between infinitesimal and
local convexities is proved. Indeed the following result holds:

\begin{teo}{\bf [Finslerian Bishop's Theorem]}\label{mainmain} Let $M$ be a smooth manifold, 
endowed with a  Finsler metric 
whose fundamental tensor  (\ref{fundmatr}) is $C^{1,1}_{\rm loc}$ 
(i.e. its components are $C^{1}$ in $TM\setminus 0$ with locally Lipschitz  derivatives)
and let $N\subset
(M,F)$ be a $C_{\rm loc}^{2,1}$ embedded hypersurface (i.e., $N$
is locally  regarded as the inverse image of some $C^{2,1}$ regular function).

Let $p\in N$ and choose a transverse direction as inner pointing
in some neighborhood $U$ of $p$. If $N$ is infinitesimally convex
in $U\cap N$, then $N$ is locally convex at $p$ (and, thus, on all $U\cap N$).
\et
Notice that this extension of Bishop's theorem to the Finsler case
is also useful in the Riemannian setting, as in this case only $C^{1,1}$
differentiability is required for the metric. In regard to the hypersurface $N$,
it is not clear if more specific techniques on regularity may reduce
$C_{\rm loc}^{2,1}$ in $C^2$.
However, as pointed out by Li and Nirenberg \cite{LN}, the
hypothesis $C_{\rm loc}^{2,1}$ is the natural regularity
assumption when the distance function to a boundary is considered
(see also \cite[Sect. 4, 5]{cys}).

Theorem \ref{mainmain}, combined with the
straightforward implications discussed in Section \ref{s15}, yields
the full equivalences among the notions of convexity for the
boundary of a domain, namely:

\begin{cor}\label{coor}
Let $D$ be a
$C^{2,1}_{\rm loc}$ domain (i.e. an open connected subset of $M$ whose boundary is locally defined as a level set of a $C^{2,1}$ function) in a manifold endowed with a  Finsler
metric  whose fundamental tensor is  $C^{1,1}_{\rm loc}$ on $TM\setminus 0$. It is equivalent for $\partial D$ to be: (a)
infinitesimally convex, (b) geometrically convex and (c) locally
convex. \end{cor}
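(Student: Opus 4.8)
The plan is to establish the cycle of implications $(a)\Rightarrow(c)\Rightarrow(b)\Rightarrow(a)$, in which the single substantial arrow is the first one and coincides exactly with the local statement already obtained in Theorem \ref{mainmain}; the other two are the elementary implications recorded in Section \ref{s15}. Thus the corollary should follow by a globalization of Theorem \ref{mainmain} together with two definition-chasing steps, and no new analytic difficulty is expected.

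For $(a)\Rightarrow(c)$ I would argue pointwise. Infinitesimal convexity of $\partial D$ is a condition holding at every $p\in\partial D$, and the inner pointing transverse direction is canonically prescribed by the domain (it points into $D$), so for each $p$ there is a neighborhood $U$ on which $N=\partial D$ is infinitesimally convex. Theorem \ref{mainmain} then applies verbatim and yields local convexity at $p$; since $p$ is arbitrary, $\partial D$ is locally convex. The remaining arrows are taken from Section \ref{s15}: the implication $(b)\Rightarrow(a)$ is obtained by differentiation, writing $D$ locally as $\{f<0\}$ for a $C^{2,1}$ defining function $f$ and noting that for $v\in T_pN$ the curve $t\mapsto f(\exp_p(tv))$ has vanishing first derivative at $0$ while its second derivative is, up to a positive factor, the second fundamental form evaluated on $v$, so that the one-sidedness encoded in geometric convexity forces this quantity to be non-negative; the implication $(c)\Rightarrow(b)$ is immediate once one observes that every geodesic tangent to $\partial D$ at $p$ has initial velocity in $T_pN$ and is therefore controlled by the tangent exponential image appearing in the definition of local convexity.

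The principal obstacle is organizational rather than analytic: one must match correctly the pointwise nature of $(a)$ with the ``at every point'' reading of local convexity, and one must account for non-reversibility, running the same argument for the reversed metric $\tilde F$, which is legitimate thanks to the reduction carried out in Section \ref{s15} showing that each notion of convexity for $F$ is equivalent to the corresponding one for $\tilde F$. One should also check that the regularity $C^{2,1}_{\rm loc}$ of $\partial D$ and $C^{1,1}_{\rm loc}$ of the fundamental tensor suffices for the second-order expansion used in $(b)\Rightarrow(a)$; this is exactly the regularity under which the second fundamental form of $\partial D$ is defined. The genuinely hard step---the construction of the sweeping family of surfaces and the uniform control of focal distances under these weak hypotheses---has already been absorbed into Theorem \ref{mainmain}, so the corollary reduces to these routine verifications.
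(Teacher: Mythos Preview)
Your proposal is correct, but your cycle differs in one arrow from the paper's own organization. The paper closes the equivalences as $(a)\Leftrightarrow(c)$ (via Theorem~\ref{mainmain} and the easy $(c)\Rightarrow(a)$ of Section~\ref{s15}) together with $(a)\Leftrightarrow(b)$, where the nontrivial direction $(a)\Rightarrow(b)$ is Corollary~\ref{cor}, an immediate consequence of the non-tangency result Proposition~\ref{pA} that is anyway proved en route to Theorem~\ref{mainmain}. You instead run $(a)\Rightarrow(c)\Rightarrow(b)\Rightarrow(a)$, so you replace the paper's $(a)\Rightarrow(b)$ by a direct $(c)\Rightarrow(b)$. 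That implication is indeed elementary, but it is not actually recorded in Section~\ref{s15} as you suggest; it requires the short open--closed argument you only hint at: if $\gamma:[a,b]\to\overline D$ has endpoints in $D$ and touches $\partial D$ at some $c$, then $\phi\circ\gamma$ has a minimum at $c$, so $\dot\gamma(c)\in T_{\gamma(c)}\partial D$, and local convexity (for both $\exp$ and $\widetilde\exp$, cf.\ Remark~\ref{infconvequiv}) forces $\gamma(s)\notin D$, hence $\gamma(s)\in\partial D$, for $s$ near $c$; the set $\{s:\gamma(s)\in\partial D\}$ is then open and closed, contradicting $\gamma(a)\in D$. Your $(b)\Rightarrow(a)$ is the contrapositive argument the paper sketches in Section~\ref{s15}: strict failure of \eqref{fc} at $(x,v)$ makes the tangent geodesic enter $D$ on \emph{both} sides of $0$ (not ``one-sided'' as you write), producing a geodesic in $\overline D$ with endpoints in $D$ that meets $\partial D$. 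The paper's route has the minor economy that once Proposition~\ref{pA} is in hand for Theorem~\ref{mainmain}, the arrow $(a)\Rightarrow(b)$ is free and no extra connectedness argument is needed; your route avoids invoking Corollary~\ref{cor} separately at the cost of that small extra step.
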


For a domain $D$ the correspondence of the equivalent notions of
convexity for $\partial D$ and the convexity of $D$ is summarized
in the following result  proved in Section \ref{s3},  which involves
the balls for the symmetrized distance ${\rm d_s}$
of the pseudo-distance associated to $F$:

\begin{teo}\label{main}
Let $D$ be a $C^{2,1}_{\rm loc}$ domain of a smooth manifold $M$
endowed with a  Finsler metric $F$  having $C^{1,1}_{\rm loc}$ fundamental tensor and such that the intersection
of the closed symmetrized balls $\overline B_s(p,r)$ with
$\overline D$ is compact.

Then, $D$ is convex if and only if $\partial D$ is convex (in any one of the equivalent sense in Corollary~\ref{coor}).
Moreover, in this case,  any pair of points in $D$ can be joined
by infinitely many connecting geodesics contained  in $D$ and having
diverging lengths if   $D$ is not contractible.
\end{teo}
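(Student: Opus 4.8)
The plan is to separate the equivalence ``$D$ convex $\iff\partial D$ convex'' from the multiplicity statement, and to handle the equivalence through a variational existence-plus-regularity argument carried out in the intrinsic length space $\overline D$. Throughout, the role of the compactness hypothesis on $\overline B_s(p,r)\cap\overline D$ is to replace completeness of $\overline D$: it makes $\overline D$, endowed with the length structure induced by $F$, a locally compact length space in which closed $d_s$-bounded sets are compact. First I would deduce by a Hopf--Rinow / Avez--Seifert type argument that any $x,y\in D$ are joined by a curve $\sigma\subset\overline D$ realizing the intrinsic distance; a priori such a minimizer is only an absolutely continuous curve in the closed set $\overline D$ and may run along $\partial D$.

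For the implication ``$\partial D$ convex $\Rightarrow D$ convex'' I would show that local convexity (Corollary~\ref{coor}) forces this minimizer to avoid $\partial D$ at every interior parameter. Away from $\partial D$ the curve $\sigma$ is a genuine geodesic of $(M,F)$ by the usual first-variation and regularity argument. If instead $\sigma(t_0)\in\partial D$ for some interior $t_0$, local convexity provides a tangential geodesic of $M$ lying in $M\setminus D$ near $\sigma(t_0)$, and a short arc of $\sigma$ about $t_0$ can be replaced by a strictly shorter arc pushed into $D$, contradicting minimality. Hence $\sigma$ lies in the open set $D$ and is a connecting geodesic, so $D$ is convex. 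The delicate point here is exactly the low regularity: this shortcut estimate must be made with only locally Lipschitz second derivatives of the defining function, which is precisely why Theorem~\ref{mainmain} was established under the $C^{2,1}_{\rm loc}$ and $C^{1,1}_{\rm loc}$ hypotheses.

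The converse ``$D$ convex $\Rightarrow\partial D$ convex'' I would prove by contraposition. If $\partial D$ fails to be infinitesimally convex at some $p$, there is $v\in T_p\partial D$ along which the second fundamental form is negative, so the tangential geodesic $\gamma\colon t\mapsto\exp_p(tv)$ enters $D$ for small $t\neq0$. Taking $x$ and $y$ on $\gamma$ on opposite sides of $p$ and sufficiently close to it, I would argue that every interior geodesic joining them and close to $\gamma$ must cross $\partial D$ (again by negativity of the second fundamental form), so the intrinsic minimizer in $\overline D$ is forced onto $\partial D$ and cannot be a geodesic of $M$ contained in $D$; this denies convexity of $D$.

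For the multiplicity statement, assume $D$ convex and work with the energy functional $E$ on the Hilbert manifold of $H^1$ paths in $\overline D$ joining $x$ to $y$. The crucial and hardest step is to show that $E$ satisfies the Palais--Smale condition and that its critical points are geodesics of $M$ contained in $D$: the convex boundary must act as a wall that neither traps Palais--Smale sequences nor produces boundary geodesics. I would handle this by a penalization scheme, approximating the constraint ``stay in $\overline D$'' by potentials that blow up at $\partial D$ and using convexity to show that the penalized critical points converge to interior geodesics, the compactness hypothesis supplying the needed compactness of bounded sequences. Since $\overline D$ retracts onto the \emph{non-contractible} $D$, the path space has infinite Lusternik--Schnirelmann category (equivalently, unbounded Betti numbers, by the classical result of Fadell and Husseini), whence $E$ has infinitely many critical points; finally Palais--Smale together with the compactness hypothesis makes each sublevel set contain only finitely many geodesics, forcing the lengths of this infinite family to diverge.
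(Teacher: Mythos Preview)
Your argument for ``$\partial D$ convex $\Rightarrow D$ convex'' has a genuine gap at the shortcut step. You produce a length-minimizer $\sigma$ in $\overline D$ and claim that if $\sigma(t_0)\in\partial D$ then a short arc of $\sigma$ can be replaced by a strictly shorter arc in $D$. This is not justified and in general is false under mere (non-strict) convexity: if $\partial D$ contains a totally geodesic piece, the constrained minimizer may run along it and no strictly shorter competitor in $D$ exists. More fundamentally, a length-minimizer subject to the obstacle $\overline D$ need not be a geodesic of $(M,F)$ at boundary-touching points; it satisfies an equation with a Lagrange multiplier $\lambda\ge 0$ supported on the contact set, and the whole content of the implication is to show $\lambda\equiv 0$. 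Your ``replace by a shorter arc pushed into $D$'' does not establish this. The paper avoids the obstacle-regularity issue by using, already for existence, the penalization scheme you reserve for multiplicity: one minimizes $J_\eps(\gamma)=J(\gamma)+\eps\int_0^1\phi(\gamma)^{-2}\,\de s$ on $\Omega(p,q;D)$, obtains $C^1$ critical points $\gamma_\eps$ satisfying an explicit ODE (Lemma~\ref{loose}), proves the key a~priori bound $\sup_\eps\|\eps\,\phi(\gamma_\eps)^{-3}\|_\infty<\infty$ via a second-derivative test at the minimum of $\phi\circ\gamma_\eps$ (Lemma~\ref{cruciale}), passes to the limit (Proposition~\ref{ben}), and only then uses infinitesimal convexity pointwise to force the limiting multiplier $\lambda$ to vanish. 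Once the limit is known to be an $M$-geodesic, geometric convexity (Corollary~\ref{cor}) places it in $D$. So the penalization is not merely a device for Palais--Smale in the multiplicity part; it is the mechanism that replaces your unproved shortcut lemma.

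Two smaller points. Your proof of ``$D$ convex $\Rightarrow\partial D$ convex'' is more elaborate than needed: the paper just observes that a direction of negative normal curvature produces an $M$-geodesic tangent to $\partial D$ that enters $D$ on both sides, immediately contradicting convexity of $D$. And in the multiplicity part, the claim that each sublevel contains only finitely many geodesics is not true in general and is not how one gets divergence of lengths; the paper argues instead that $\cat_{\Omega(p,q;D)}(J^c)<\infty$ for every $c$, so the min-max values $c_{\eps,m}$ built from Fadell--Husseini's compacta with $\cat\ge m$ must exceed any given $c$ for $m$ large, and passing $\eps\to 0$ yields geodesics with $J$-values (hence lengths) tending to infinity.
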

Some remarks are in order. First, the compactness of the
intersections $\overline D \cap \overline B_s(p,r)$,
plays a role analogous
to that of the completeness of $\overline D$ in the Riemannian setting.
This becomes natural after \cite{cys}, where a
correspondence between some elements in Lorentzian and Finslerian
geometries is exploited. For example, the compactness of the
symmetrized balls $\overline B_s(p,r)$ (which is a condition
weaker than forward or backward completeness of $F$) yields the existence of
a minimizing  geodesic between any two points in a Finsler manifold.
Our approach to the problem of the convexity of a domain uses variational methods which directly yield  multiplicity results by standard arguments.
The proof  is  based on a
{\em penalization technique} which goes back to Gordon
\cite{g1}.
Roughly speaking, the lack of completeness impedes the energy functional
(see (\ref{fe}) below) to satisfy the Palais-Smale condition, so that many classical results in critical
point theory are not applicable. Thus, the functional is modified by adding a
penalizing term which becomes infinite close to the boundary and
a family of penalized functionals $(J_\eps)_{\eps>0}$ is
considered. 

However, there are  interesting differences
with respect to the Riemannian setting studied in \cite{g1}. 
In fact, the critical points of
such functionals (which are  approximating solutions) are  $C^1$ curves having supports in $D$. They are continuously twice differentiable only on the open subset of the domain of parametrization where their velocity vector field is not zero. Anyway, due to the particular conservation law they satisfy \eqref{ene}, the 
 set where their derivative vanishes is not negligible and
the passage to the limit ($\eps\to 0$) in the penalization technique is much more delicate than in the Riemannian case. Another difficulty is connected with the fact that 
a result analogous to the Nash
isometric embedding theorem (that in several papers about this topic, allows one to avoid many technicalities,
see for example \cite{g1, bfg, gm, g})  
does not  hold in general for a Finsler manifold, \cite[Theorem 1.1]{Shen98}.

Theorem~\ref{main} extends to domains of Finsler
manifolds a result  about 
Finsler metrics in $\R^N$ (see  \cite[Theorem 1, p. 250]{gh}), stating  the
existence of a geodesic of length equal to the Finslerian distance between any two points in $\R^N$.   
Such a result  is an emblematic example
of the application of the {\em direct method in the calculus of variations} (cf. \cite[Preface]{gh}).  
Namely it comes from a minimization argument, based on the
lower-semicontinuity with respect to the $C^0$-topology of the
energy integral of a Finsler metric   (see \cite[Lemma 5, p. 259]{gh}). The role of the assumption about the compactness of the
sets $\overline B_s(p,r)\cap \overline D$ in Theorem~\ref{main} is
taken in \cite{gh} by the existence of a constant $a>1$ such that
\[\frac{1}{a} |y|\leq F(x,y)\leq a|y| \hbox { for any } x\in \R^N, y\in\R^N.\]
The result in \cite{gh} can be extended to a domain $D$ in $\R^N$ but, as the authors themselves  observe \cite[Remark 3, p. 254]{gh}, it is not easy to give and to check convexity conditions on $\partial D$  ensuring that a minimizing geodesic in $D$ exists. 
Our result on the equivalence between different notions of convexity for $\partial D$  and Theorem~\ref{main} aim also to fill that gap between Riemannian and Finsler geometry.

\section{Preliminaries} \label{s15}
Hopf-Rinow theorem states that metric (or equivalently geodesic)
completeness  of a Riemannian manifold $(M,g_R)$ is a sufficient
condition for convexity.

When  a domain $D$ of a
Riemannian manifold $M$ is  considered, suitable convexity
assumptions are needed in order to control the lack of
completeness. As pointed out by Gordon \cite{g1}, on Riemannian
manifolds this problem is interesting also because of its
relation, via the Jacobi metric, to the problem of connecting two
points by means of a trajectory of fixed energy for a Lagrangian
system. In the case of a Finsler manifold the study of convexity
of a domain  is interesting also in connection with the
existence of lightlike and timelike geodesics connecting a point
with a line in an open region of a  stationary spacetime (see \cite{cym}).

At first we review the different notions of convexity for the
boundary  of a smooth domain $D$ of a Riemannian manifold  (see
\cite{bgs} and \cite{s} where also non differentiable boundaries
are considered).

We say that the boundary $\partial D$ of $D$ at a point $x\in
\partial D$ is
\begin{itemize}
\item {\em infinitesimally convex} if the second fundamental form
$\sigma_x$, with respect to the interior normal, is positive
semi-definite; \item {\em locally convex} if there exists a
neighborhood $U \subset M$ of $x$ such that \be\label{espo}
\exp_x\left( T_x\partial D\right)\cap\left(U\cap D\right) =
\emptyset. \ee
\end{itemize}
In order to apply variational methods to the study of geodesic
connectedness, a characterization of the infinitesimal convexity
is useful. Indeed, note that  for each $x\in\partial D$ a neighborhood $U\subset M$ of $x$ and a differentiable
function (with the same degree of differentiability of $\partial
D$) $\phi: U \rightarrow\R$ exist such that \be\label{defb}
\begin{cases}
          \phi^{-1}(0) =  U\cap\partial D \\
          \phi > 0&\mbox{on $U\cap D$ }\\
          {\rm d}\phi(x)\not= 0 & \text{for every $x\in U\cap \partial D$}
 \end{cases}
\ee and then the following holds:
\begin{itemize}
\item $\partial D$ is infinitesimally convex  at $x\in\partial D$
if and only if for one (and then for all) function $\phi$
satisfying (\ref{defb}):
$$H_\phi(x)[y,y]\leq 0 \quad \text{for every $y\in T_x\partial D$.}
$$
\end{itemize}
Easily, the local convexity at $x$ implies the infinitesimal one. For the converse, one has to
assume that the infinitesimal convexity holds on a neighborhood of $x$; in this case, Bishop \cite{b}
proved that the converse holds if the metric is $C^4$.
 Notice that $\partial D$ is assumed to be an embedded manifold in $M$, so  that the function $\phi$ in
(\ref{defb}) can be found as defined on all $M$. Thus, the following
global definitions (equivalent at least in the $C^4$ case) can be given:
\begin{itemize}
\item $\partial D$ is {\em infinitesimally convex} if so it is at
any point;
\item $\partial
D$ is {\em variationally convex} if for one, and then for all,
function $\phi$ on $M$ such that \be
\label{defbt}
\begin{cases}
            \phi^{-1}(0) = \partial D \\
            \phi > 0 & \mbox{on $D$} \\
            \de\phi(x)\not= 0& \text{for every $x\in\partial D$}
\end{cases}
\ee it holds
\[
H_\phi(x)[y,y]\leq 0 \quad \text{for every $x\in\partial D$, $y\in
T_x\partial D$.}
\]
\item $\partial D$ is {\em locally
convex} if so it is at any point.
\end{itemize}
There is also another  definition of convexity which comes out to be equivalent to the previous ones.
\begin{itemize}
\item $\partial D$ is {\em geometrically convex} if for any
$p,q\in D$ the range of any geodesic
$\gamma:[a,b]\rightarrow\overline D$ such that $\gamma(a) = p$ and
$\gamma(b) = q$  satisfies
\[
\gamma\left([a,b]\right)\subset D.
\]
\end{itemize}
This definition is intermediate between infinitesimal/variational convexity and local convexity.
In fact, under geometric convexity, any geodesic $\rho: ]-\epsilon,\epsilon[\rightarrow M$ with initial velocity in $T_x\partial D$ will remain
in $M\setminus D$ reducing eventually  $\epsilon >0$. This implies infinitesimal convexity at $x$ but, in order to obtain local convexity, one must ensure that the same $\epsilon$ can be chosen in all the directions.  Bishop's result ensures the equivalence in the $C^4$ case (see also \cite{g} for a different technique in one of the implications). As we will see in Proposition~\ref{pA} and Corollary~\ref{cor}, the equivalences hold even for $C^{1,1}$ Riemannian metrics, as these equivalences hold in the general Finslerian case whenever the fundamental  tensor has such a level of regularity.

There are also different ways to prove that, for a complete $M$
(or equivalently $\overline D$), the boundary $\partial D$ is
convex if and only if the domain $D$ is convex (see the review \cite{s}).

We deal here with convexity of a domain $D$ of a
Finsler manifold $M$, so let us
recall some basic notions in Finsler Geometry.

A Finsler structure  on a smooth finite dimensional manifold $M$
is a function $F\colon TM\to[0,+\infty)$ which is continuous on
$TM$, smooth on $TM\setminus 0$, vanishing only on the zero
section, fiberwise positively homogeneous of degree one, i.e.
$F(x,\lambda y)=\lambda F(x,y)$, for all $x\in M$,   $y\in T_x M$
and $\lambda>0$, and which has fiberwise strictly convex square
i.e. the matrix \beq\label{fundmatr}
g(x,y)=\left[\frac{1}{2}\frac{\partial^2 (F^2)}{\partial
y^i\partial y^j}(x,y)\right]\eeq is positively defined for any
$(x,y)\in TM\setminus 0$.  Typically, the word ``smooth'' means $C^{\infty}$  and one can maintain this here for the manifold $M$. Nevertheless,
in order to obtain a sharp result on differentiability, $F$ {\em smooth} will mean that the fundamental
tensor $g$ in \eqref{fundmatr} is $C^{1,1}_{\rm loc}$. Obviously, this will hold for the Finsler metric associated to a $C^{1,1}$  Riemannian metric as well as when $F$ is a $C^{3,1}_{\rm loc}$ function on $TM\setminus 0$.
The length of a piecewise smooth  curve
$\gamma\colon [a,b]\to M$ with respect to the Finsler structure
$F$  is defined by
\[\ell_F(\gamma)=\int_a^b\!\!
F(\gamma,\dot\gamma)\; \de s\] hence the distance  between two
arbitrary points $p, q\in M$ is given by
\[
\de (p,q)= \inf_{\gamma\in {\mathcal P}(p,q; M)}{\ell}_F(\gamma),
\] where ${\mathcal P}(p,q;M)$ is the set of all piecewise smooth
curves $\gamma\colon[a,b]\to M$ with $\gamma(a)=p$ and
$\gamma(b)=q$.  The distance function is non-negative and
satisfies the triangle inequality, but it is not symmetric since
$F$ is only positively homogeneous of degree one in $y$. So for
any point $p\in M$ and for all $r>0$ we can define two different
balls centered at $p$ and having radius $r$: the {\em forward
ball} $B^+(p,r)=\{q\in M \mid \de (p,q)<r\}$ and the {\em
backward} one $B^-(p,r)=\{q\in M \mid \de (q,p)<r\}$. Analogously,
it makes sense to give two different notions of Cauchy sequences
and completeness: a sequence $(x_n)_n\subset M$ is a {\em forward}
(resp. {\em backward}) {\em Cauchy sequence} if for all $\eps>0$
there exists an index $\nu\in\N$ such that, for all $m\geq n\geq
\nu$, it is  $\de(x_n,x_m)<\eps$ (resp. $\de (x_m,x_n)<\eps$);
consistently a Finsler manifold is {\em forward complete} (resp.
{\em backward complete}) if every forward (resp. backward) Cauchy
sequence converges. It is well known that both the topology
induced by the forward  balls and that induced by the backward
ones agree with the underlying manifold topology. Moreover
suitable versions of the Hopf-Rinow theorem hold (cf.
\cite[Theorem 6.6.1]{bcs}) stating, in particular, the equivalence
of    forward (resp. backward) completeness and the compactness of
closed and forward (resp. backward) bounded subsets of $M$. The
validity of one of these properties implies the existence of a
geodesic connecting any  pair of points in $M$ and minimizing the
Finslerian distance, i.e. the convexity of $M$. Geodesics can be defined in different ways using different
connections defined on the pulled-back bundle $\pi^*TM$,
$\pi\colon TM\to M$, (cf. \cite[Chapter 2]{bcs}) or as critical
points of the length functional (cf. \cite[Proposition 5.1.1]{bcs}
for details); furthermore (cf. for example \cite[Proposition 2.3]{cym}) a
smooth curve $\gamma$ on $[a,b]$ is a
geodesic parameterized with constant speed (i.e $s\mapsto F(\gamma(s),\dot\gamma(s))={\rm const.}$) if and only if it is a critical point of the energy functional
\be\label{fe} J(\gamma)=\frac 12\int_a^b F^2(\gamma,\dot \gamma)\;
\de s \ee 
defined on the manifold of $H^1$ curves having fixed
endpoints. Thus in local coordinates $\gam$ 
satisfies the equations 
\beq\label{spray}
\ddot\gam^i(s) +g^{ij}(\gamma,\dot\gamma)\Big(\frac 12\partial_{y^jx^k}F^2(\gamma,\dot\gamma)\dot\gamma^k-\frac12\partial_{x^j}F^2(\gamma,\dot\gamma)\Big)=0.
\eeq
Here $g^{ij}$
are the components of the inverse matrix of fundamental tensor $g$, $\partial_{x^j}$, $\partial_{y^jx^k}$ are the symbols of the partial derivatives with respect to the variables $x^j, y^j$ and we adopt  the usual Enstein's summation convention. Using the structural equations defining the Chern connection (see  \cite[Theorem 5.2.2]{s2}) it can be proved that the functions 
$G^i(x,y)=g^{ij}(x,y)\big(\tfrac 12\partial_{y^jx^k}F^2(x,y)y^k-\tfrac12\partial_{x^j}F^2(x,y)\big)$, $(x,y)\in TM\setminus 0$, are equal to
$\Gamma^i_{jk}(x,y)y^jy^k$, where $\Gamma^i_{jk}$ are the components of the Chern
connection.\footnote{To see that use Eqs. (5.2), (5.7) and the formula after equation (5.31) in \cite{s2}, besides the fact that the functions $G^i$ are positively homogeneous of degree $2$ in the $y$ variable.} Therefore geodesics equations become
\beq\label{eqgeo}
 \ddot\gam^i(s) +\Gamma^i_{jk}(\gamma(s),\dot\gamma(s))\dot\gamma^j(s)\dot\gamma^k(s)=0.
\eeq 
\bere\label{sym}
Consider the {\em
symmetrized distance} on $M$
\[
\de_s(p,q)=\frac 12 \left(\de(p,q) + \de(q,p)\right).
\]
and denote by $B_s$ the balls associated to $\de_s$. It results that
if the Heine-Borel property holds, i.e. for all $x\in M, r>0$, the
closed balls $\overline B_s(x,r)$ are compact (or equivalently the
subsets  $\overline B^+(x,r_1)\cap \overline B^-(y,r_2)$ are
compact for any $x,y\in M, r_1, r_2>0$), then the metric space
$(M,\de_s)$ is complete (cf. \cite[Proposition 2.2]{cys}).

This condition
implies convexity (cf. \cite[Theorem 5.2]{cys}). It is worth to
stress that the Hopf-Rinow theorem in general does not hold for
the metric $\de_s$. For instance, Example  2.3 in \cite{cys}
exhibits  a non compact, $\de_s$-bounded Randers space whose
symmetrized distance $\de_s$ is complete.
\ere

Now, let $N$ be a hypersurface of $M$ and choose a (unit)
normal vector $n$  at some $x\in N$ (namely, the hyperplane
parallel to $T_xN$ through $n$ is tangent to the $F$-unit ball at
$T_xM$). The {\em normal curvature} $ \Lambda_n$ at a point $x\in
N$ in a direction $y\in T_x N$ is defined by
\beq\label{normcurv}
\Lambda_n(y)=
g(x,n)[\nabla_{\dot\gamma(s)}\dot\gamma(s)\mid_{s=0},  n]
\eeq
where $\gamma\colon ]-\eps,\eps[\to N$ is a geodesic for the
Finsler metric induced by $F$ on  $N$ such that $\gam(0)=x,\
\dot\gam(0)=y$, $\nabla_{\dot\gam}\dot\gam$ is the {\em covariant
derivative} of $\dot\gam$ along $\gam$ in $(M,F)$ (see \cite[\S
5.3]{s2}). Observe that the  definition  of the normal curvature
$\Lambda_n(y)$ in \eqref{normcurv} differs from the one in \cite[\S 14.2]{s2} for a minus sign.

When we deal with domains $D$ of Finsler manifolds, we say (cf.
\cite[Proposition 14.2.1]{s2}) that the boundary $\partial D$ of
$D$ at a point $x\in \partial D$ is
\begin{itemize}
\item {\em infinitesimally convex} if the normal curvature  with respect to the normal vector 
pointing into $D$ is
non-negative or equivalently if  for a function $\phi$ as in
(\ref{defb}): 
\be\label{fc} H_\phi(x,y)[y,y]\leq 0 \quad \text{for
every $y\in T_x\partial D$}, \ee
\end{itemize}
where $H_\phi$ is the Finslerian Hessian of $\phi$ (see \cite[\S
14.1]{s2}) defined, for each $(x,y)\in TM\setminus 0$, as
$H_\phi(x,y)[y,y]=\frac{\de^2 }{\de s^2}(\phi\circ \gam)(0)$, 
being $\gamma$  the geodesic of $(M,F)$ (parameterized with
constant speed) such that $\gam(0)=x$ and $\dot\gam(0)=y$. Taking
into account the equation (\ref{eqgeo}) satisfied by constant
speed geodesics, we get  in local coordinates 
\beq\label{hessF}
(H_\phi)_{ij}(x,y)y^iy^j= \frac{\partial^2\phi}{\partial
x^i\partial x^j}(x)y^iy^j - \frac{\partial\phi}{\partial
x^k}(x)\Gamma^k_{ij}(x,y)y^iy^j, 
\eeq 
being $\Gamma^k_{ij}(x,y)$
the components of the Chern connection of $(M,F)$.

In general, Finsler metrics are {\em non-reversible}, which means
$F(x,-y)\not=F(x,y)$ on $TM$, thus we can define the {\em
reversed} Finsler metric $\tilde F$ as $\tilde F(x,y)=F(x,-y)$ for
each $(x,y)\in TM$.

If $F$  is non-reversible and $\gam$ is a geodesic on $[0,1]$, the
reversed curve $\tilde \gam(s)=\gam (1-s)$ in general is not a
geodesic of $F$, but it is a geodesic for $\tilde F$  (this
can be easily seen by using the fact that geodesics are the curves
that locally minimize the length functional and that $\ell_{\tilde
F}(\tilde \gam)=\ell_F(\gam)$).

\bere\label{infconvequiv}
The notions of infinitesimal convexity for $F$ and $\tilde F$
are equivalent: indeed, if $x\in D, y\in T_x\partial D$ and
$\tilde\gamma$ is the geodesic for $(M,\tilde F)$ such that
$\tilde\gam(0)=x, \dot{\tilde\gam}(0)=-y$, then
$\gam(s)=\tilde\gam(-s)$ is a geodesic in $(M,F)$ (and $\gam(0)=x,
\dot\gam(0)= y$); since the components $\tilde\Gamma^i_{jk}$ of
the Chern connection of $\tilde F$ satisfy
$\tilde\Gamma^i_{jk}(x,-y)=\Gamma^i_{jk}(x,y)$, we have
$H_\phi^{\tilde F} (x,-y)[-y,-y]=H_\phi(x,y)[y,y]\leq 0$.
Moreover, associated to a non-reversible metric, there are two
exponential maps: one, denoted by $\exp$, associated to the
geodesics of $F$,  the other one, denoted by $\widetilde \exp$
associated to $\tilde F$, cf. \cite[Chapter 6]{bcs}). The
definition of {\em local convexity} in \cite[p. 216]{s2} is indeed
equivalent to require that (\ref{espo}) holds for both the
exponential maps.  Recall also that the definition of {\em
geometric convexity} for the Riemannian setting can be extended
trivially to the Finslerian one, and becomes equivalent for geodesics of $F$
and $\tilde F$.\ere

\section{Convexity of the Boundary}\label{s2}
As in the Riemannian case,   local convexity at one point of an hypersurface in a Finsler manifold implies
infinitesimal convexity (cf. \cite[Theorem 14.2.3]{s2}).

We pointed out that, except for Berwald spaces, Bishop's theorem (which ensures that the converse is true in a Riemannian manifold, when infinitesimal convexity holds in a neighborhood of the point in the hypersurface) seems to be an open issue in the Finslerian setting.
We see here that in fact  Bishop's theorem is true also for any Finsler manifold.

Throughout this section $D$ denotes a
$C^{2,1}_{\rm loc}$
domain, that is, there  exists a $C^{2,1}_{\rm loc}$ function $\phi$ which satisfies  (\ref{defbt}). Notice that then $\partial D$ is endowed with an intrinsic $C^{1,1}_{\rm loc}$ structure.

We start by giving two results which generalize   the analogous ones in \cite{g} where geometric convexity of a $C^3$  domain of a complete Riemannian manifold is  studied.
The differential inequality in the next lemma is less restrictive than the one in \cite[Lemma 9]{g}.
\begin{lem} \label{grnew}
If  $\psi \in C^2 ( [0, b] , \R )$ is a non-negative function verifying
\beq  \label{avg}
\begin{cases}
\ddot \psi  \leq A ( \psi + | \dot \psi | ) \\
\psi (0) = 0,  \dot \psi (0) = 0
\end{cases}
\eeq
for some $A >0$, then $\psi \equiv     0$ on $[0,b]$.
\end{lem}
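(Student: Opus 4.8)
The plan is to prove the statement first on a short interval $[0,\delta]$ whose length $\delta$ depends only on the constant $A$, and then to propagate it across all of $[0,b]$ by a continuation argument: if $\psi\equiv 0$ on $[0,s_*]$ with $s_*<b$, then $\tilde\psi(s):=\psi(s_*+s)$ satisfies exactly the same hypotheses (\ref{avg}) on $[0,b-s_*]$, so the local result applies again, and since $\delta$ is uniform, finitely many steps exhaust $[0,b]$. Thus everything reduces to the local claim. What makes this delicate is that (\ref{avg}) bounds $\ddot\psi$ only \emph{from above} and contains the non-smooth term $|\dot\psi|$; one cannot simply run Gronwall's lemma on $\psi+|\dot\psi|$, since controlling $|\dot\psi|$ on the set $\{\dot\psi<0\}$ would require a lower bound on $\ddot\psi$, which is unavailable. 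The idea is instead to exploit the sign of $\dot\psi$ jointly with the non-negativity of $\psi$.

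The key step is a family of one-sided integral estimates obtained by integrating $\ddot\psi\le A(\psi+|\dot\psi|)$ over the maximal intervals on which $\dot\psi$ keeps a constant sign; the endpoints of such intervals are zeros of $\dot\psi$ (and $\dot\psi(0)=0$ by hypothesis). On a positivity interval $(\alpha,\beta)$ one has $\dot\psi(\alpha)=0$, so integrating forward gives $\dot\psi(s)=\int_\alpha^s\ddot\psi\le A\int_\alpha^\beta(\psi+|\dot\psi|)$ for $s\in(\alpha,\beta)$; on a negativity interval $(\alpha,\beta)$ one has $\dot\psi(\beta)=0$, and integrating backward gives $|\dot\psi(s)|=-\int_s^\beta\ddot\psi\le A\int_\alpha^\beta(\psi+|\dot\psi|)$. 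In either case $\sup_{(\alpha,\beta)}|\dot\psi|\le A\int_\alpha^\beta(\psi+|\dot\psi|)$. Let $t^\#$ be the largest zero of $\dot\psi$ in $[0,\delta]$ (it exists, as $\{\dot\psi=0\}$ is closed and contains $0$). Summing the previous estimate over the countably many sign-intervals contained in $[0,t^\#]$, each of length $\le\delta$, and writing $u(t)=\int_0^t|\dot\psi|$, I obtain $u(t^\#)\le \delta A\int_0^{t^\#}(\psi+|\dot\psi|)$. Since $\psi(s)=\int_0^s\dot\psi\le u(s)\le u(t^\#)$, this gives $\int_0^{t^\#}\psi\le \delta\,u(t^\#)$ and hence $u(t^\#)\le \delta A(1+\delta)\,u(t^\#)$. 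Choosing $\delta$ so small (depending only on $A$) that $\delta A(1+\delta)<1$ forces $u(t^\#)=0$, i.e. $\dot\psi\equiv0$ and $\psi\equiv0$ on $[0,t^\#]$.

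It then remains to show $t^\#=\delta$. If $t^\#<\delta$, then $\dot\psi$ has no zero in $(t^\#,\delta]$, so it keeps a constant sign there, while $\psi(t^\#)=\dot\psi(t^\#)=0$. If $\dot\psi<0$ on $(t^\#,\delta]$, then $\psi$ is strictly decreasing from $\psi(t^\#)=0$, contradicting $\psi\ge0$. If $\dot\psi>0$, I set $w(s)=\int_{t^\#}^s(\psi+\dot\psi)$ and use $\dot\psi\le Aw$ together with $\psi(s)=\int_{t^\#}^s\dot\psi\le A\delta\,w(s)$ to get $\dot w\le A(1+\delta)w$ with $w(t^\#)=0$; Gronwall's lemma with vanishing initial datum forces $w\equiv0$, hence $\dot\psi\equiv0$, contradicting $\dot\psi>0$. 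Therefore $t^\#=\delta$ and $\psi\equiv0$ on $[0,\delta]$, which completes the local claim and, via the continuation, the lemma.

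I expect the main obstacle to be precisely the treatment of $|\dot\psi|$ on $\{\dot\psi<0\}$: the whole argument is arranged so that on such a set the differential inequality is integrated \emph{backward} from a zero of $\dot\psi$, where non-negativity of $\psi$ lets the estimate close up, rather than attempting a direct Gronwall bound that the one-sided control of $\ddot\psi$ cannot support. The bookkeeping of the sign-intervals of $\dot\psi$, and the verification that their endpoints are genuine zeros of $\dot\psi$, is the technical point needing care.
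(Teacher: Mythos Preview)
Your proof is correct and follows a genuinely different strategy from the paper's. The paper argues by comparison with the explicit linear ODE $\ddot\phi = A(\phi+\dot\phi)$: if $\dot\psi\ge 0$ throughout, a double integration plus Gronwall suffices; otherwise there is a sequence of local maxima $t_m\to 0$ with $\psi(t_m)\to 0$, and one shows $\psi(t)<\psi_m(t)$ on $]t_m,b]$, where $\psi_m$ solves that ODE with data $(\psi(t_m),0)$ at $t_m$; continuous dependence on initial data then forces $\psi_m\to 0$ pointwise, hence $\psi\equiv 0$. Your route instead slices $[0,\delta]$ into the maximal sign-intervals of $\dot\psi$ and exploits that each such interval inside $[0,t^\#]$ carries a zero of $\dot\psi$ at the ``right'' end (left endpoint for positivity, right endpoint for negativity) so that the one-sided bound on $\ddot\psi$ yields $\sup_{I_j}|\dot\psi|\le A\int_{I_j}(\psi+|\dot\psi|)$; summing and using $\psi\le\int_0^{t^\#}|\dot\psi|$ gives a contraction for $\int_0^{t^\#}|\dot\psi|$ once $\delta A(1+\delta)<1$, and a short tail argument plus continuation finishes. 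This is more elementary---no explicit ODE solution, no comparison theorem---at the cost of the sign-interval bookkeeping and the continuation step. Conceptually both approaches tame the $|\dot\psi|$ term by making the sign of $\dot\psi$ definite on pieces: you do it by decomposing the domain, the paper by passing to a comparison solution whose derivative is positive.
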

\begin{proof}
By contradiction, assume that a non-trivial solution of \eqref{avg} exists. If $\dot\psi\geq 0$ on the interval $[0,b]$, then  integrating  on  $[0,t]$, $0<t\leq b$, both hand sides in \eqref{avg} we get
\[\dot\psi(t)\leq A\left(\int_0^t\psi(s)\, \de s+\psi(t)\right).\]
Integrating  again we obtain
\[\psi(t)\leq (Ab+1)\int_0^t\psi(s)\, \de s\]
and
 from  Gronwall's inequality we get that $\psi\equiv 0$ in $[0,b]$.
Hence we can assume that a point $\bar t\in [0,b[$, say $\bar t=0$, exists such that $\dot\psi$ has indefinite sign in a right neighborhood of $0$. So a sequence $(t_m)_m$ converging to zero exists such that each $t_m$ is a maximum point of $\psi$:
$$ \psi (t_m )\to 0, \quad \dot \psi (t_m )= 0, \quad \ddot \psi (t_m ) \leq 0.
$$

Now,   let $\psi_m : [ 0 , b] \rightarrow \R$ be the unique solution of
\beq  \label{evg}
\begin{cases}
\ddot \phi  = A ( \phi +   \dot \phi  ) \\
\phi (t_m ) = \psi (t_m) ,  \dot \phi (t_m ) = 0.
\end{cases}
\eeq
We are going to  prove that
\beq \label{evgin}
\psi (t) <\psi_m (t), \quad  \text{for all $t \in]t_m ,b]$.}
\eeq
Notice that easy computations give
\[
\psi_m (t) =  C^-_m e^{\lambda_- t} + C^+_m e^{\lambda_+ t}
\]
where   $\lambda_- < 0 <\lambda_+$  are the two roots
of $\lambda^2 - A \lambda - A =0$ and $C^-_m, C^+_m$ are strictly  positive constants
obtained imposing the initial conditions in \eqref{evg}. Thus $\ddot \psi_m (t) >0$ for any $t$ and, as $\dot\psi_m(t_m)=0$, $\dot \psi_m$ is strictly increasing and positive on $]t_m,b]$.

Since $\psi_m (t_m ) = \psi  (t_m)$, $\dot \psi_m (t_m ) = 0= \dot \psi (t_m)$, $\ddot \psi_m
 (t_m ) > 0 \geq \ddot \psi(t_m)$, inequality \eqref{evgin} is true in
a right neighborhood of $t_m$. To prove that it holds on the whole interval $]t_m ,b]$, assume by contradiction that there exists a point $\tilde t\in ]t_m,b]$ such that $\psi_m(\tilde t)=\psi(\tilde t)$. Let $c>t_m$ be the minimum of the set
\[A=\{t\in]t_m,b]\ |\ \psi(t)=\psi_m(t)\}.\]
Hence $\psi  (t ) <  \psi_m (t )$ for $t\in ]t_m,c[$ and
$\dot \psi_m (c ) \leq  \dot \psi  (c)$. As $0 =\dot  \psi  (t_m ) <  \dot \psi_m
(c) \leq \dot \psi (c)$, we can consider $c_0 = \max \, \{ t \in [t_m , c[\  \mid\  \dot \psi (t) =0 \}$.
It is $\dot \psi >0$ in $]c_0 , c]$ and $c_1 \in ] c_0 , c] $ exists such that $\dot \psi (c_1 ) = \dot \psi_m (c_1) $ and
$\dot \psi (t) < \dot \psi_m (t) $ if $t \in ]c_0 , c_1 [$.
Thus, for any $t \in ]c_0 , c_1[ $, the following inequalities holds
$$\ddot \psi (t) \leq A ( \psi (t) + \dot \psi (t) ) < A  ( \psi_m  (t) + \dot \psi_m (t) ) =
\ddot \psi_m (t)$$
and we get a contradiction observing that
$$\dot \psi (c_1) = \int_{c_0}^{c_1} \ddot \psi (t)\, \de t < \int_{c_0}^{c_1} \ddot \psi_m  (t) \, \de t
< \dot \psi_m (c_1) = \dot \psi (c_1). $$

Inequality \eqref{evgin} allows us to complete the proof. Indeed, as $\psi (t_m )\to 0$, by smooth dependence of the solutions of \eqref{evg} by initial conditions,   the sequence $(\psi_m (t))_m$ goes to $0$, for each $t\in ]0,b]$.
\end{proof}
The following crucial proposition holds. This will turn out to be a
strengthening  of geometric convexity, as it forbids the
possibility of tangency to the boundary for  geodesics  in
$\overline D$ but not lying in $\partial D$.
\begin{prop} \label{pA}
Assume that $\partial D$ is infinitesimally convex in a neighborhood  $U$ of $p\in\partial D$. Let $\gamma: [0,b]\rightarrow U$ be a geodesic which satisfies $\gamma(0)=p, \gamma(]0,b])\subset U\cap D$.
Then, $\dot \gamma(0) \not\in T_p(\partial D)$.
\end{prop}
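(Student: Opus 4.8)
The plan is to argue by contradiction: suppose $\dot\gamma(0)=y\in T_p(\partial D)$, and derive a contradiction using the infinitesimal convexity together with Lemma~\ref{grnew}. Let $\phi$ be a defining function for $\partial D$ as in \eqref{defb}, chosen $C^{2,1}_{\rm loc}$ on $U$, with $\phi>0$ on $U\cap D$ and $\phi=0$ on $U\cap\partial D$. Since $\gamma(]0,b])\subset U\cap D$, the composition $\psi(s):=\phi(\gamma(s))$ is a non-negative function on $[0,b]$ with $\psi(0)=0$. The idea is to show that $\psi$ satisfies the differential inequality \eqref{avg}, which by the lemma forces $\psi\equiv0$; but then $\gamma$ would run inside $\partial D$ rather than into $D$, and a finer look at the second-order behaviour at $s=0$ will contradict $\gamma(]0,b])\subset D$.

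First I would compute $\dot\psi$ and $\ddot\psi$ along $\gamma$. Differentiating once gives $\dot\psi(s)=\de\phi(\gamma(s))[\dot\gamma(s)]$, so $\dot\psi(0)=\de\phi(p)[y]=0$ because $y\in T_p(\partial D)=\ker\de\phi(p)$. This establishes the initial conditions $\psi(0)=\dot\psi(0)=0$ in \eqref{avg}. Differentiating again and using that $\gamma$ is a geodesic (so $\ddot\gamma^i=-\Gamma^i_{jk}(\gamma,\dot\gamma)\dot\gamma^j\dot\gamma^k$ by \eqref{eqgeo}), the second derivative reorganizes into the Finslerian Hessian \eqref{hessF} evaluated along $\gamma$:
\[
\ddot\psi(s)=H_\phi(\gamma(s),\dot\gamma(s))[\dot\gamma(s),\dot\gamma(s)].
\]
The crucial point is that infinitesimal convexity \eqref{fc} controls $H_\phi$ only in tangent directions $y\in T_x\partial D$, i.e. only when $\de\phi(x)[\dot\gamma]=0$; along $\gamma$ the velocity is generally transverse. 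So I would split $\dot\gamma(s)$ into its component in $T_{\gamma(s)}\partial D$ (where \eqref{fc} gives a non-positive contribution, once the convexity hypothesis is propagated over the whole neighborhood $U\cap N$, not just at $p$) and a component measuring the deviation from tangency, which is controlled by $\dot\psi$ itself. Using the $C^{1,1}_{\rm loc}$ bounds on the fundamental tensor (hence on $\Gamma^i_{jk}$) and the $C^{2,1}_{\rm loc}$ bounds on $\phi$ on the compact $\gamma([0,b])$, the transverse and cross terms can be estimated by a constant multiple of $\psi+|\dot\psi|$, yielding the inequality $\ddot\psi\le A(\psi+|\dot\psi|)$ of \eqref{avg}.

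The main obstacle is precisely this estimate: showing that the genuinely positive curvature contributions are absorbed into $A(\psi+|\dot\psi|)$. The subtlety is that $\Gamma^i_{jk}(x,y)$ depends on the direction $y=\dot\gamma(s)$, which need not be tangent to $\partial D$, so one cannot directly invoke \eqref{fc}; instead one must compare $H_\phi(\gamma(s),\dot\gamma(s))$ with $H_\phi$ evaluated at the tangential projection of $\dot\gamma(s)$, and the error incurred by this projection must be shown to be $O(\psi+|\dot\psi|)$. Here the regularity assumptions are essential, since the Lipschitz dependence of the Christoffel symbols is what converts the difference in directions into a term of order $|\dot\psi|$, and the smallness of the normal component of $\dot\gamma$ is exactly measured by $\dot\psi=\de\phi[\dot\gamma]$. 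Once \eqref{avg} is verified, Lemma~\ref{grnew} gives $\psi\equiv0$ on $[0,b]$, contradicting $\gamma(]0,b])\subset U\cap D$ where $\phi>0$; this contradiction proves $\dot\gamma(0)\notin T_p(\partial D)$.
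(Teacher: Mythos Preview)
Your overall strategy---assume $\dot\gamma(0)\in T_p\partial D$, set $\psi=\phi\circ\gamma$, recognise $\ddot\psi=H_\phi(\gamma,\dot\gamma)[\dot\gamma,\dot\gamma]$, derive the inequality $\ddot\psi\le A(\psi+|\dot\psi|)$ and invoke Lemma~\ref{grnew}---is exactly the paper's. But there is a real gap in how you propose to obtain the inequality.

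You write that you will ``split $\dot\gamma(s)$ into its component in $T_{\gamma(s)}\partial D$'' and apply \eqref{fc} to the tangential part. This cannot be done as stated: for $s>0$ the point $\gamma(s)$ lies in $D$, not on $\partial D$, so $T_{\gamma(s)}\partial D$ does not exist and \eqref{fc} is unavailable there. Projecting only the \emph{velocity} onto $\ker\de\phi(\gamma(s))$ does not help, because the convexity hypothesis says nothing about $H_\phi(x,y)$ at interior points $x\in D$. The Lipschitz bound on the Christoffel symbols lets you control the change of $H_\phi$ under a change of direction $y$, but you still need a reference pair $(x_0,y_0)$ with $x_0\in\partial D$ and $y_0\in T_{x_0}\partial D$ at which $H_\phi\le 0$ is actually known.

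The paper supplies this missing step by projecting the \emph{base point} as well: one introduces a $C^{2,1}$ retraction $\Pi$ onto $\partial D$ (in adapted coordinates, simply dropping the $x^n$-coordinate) and sets $\gamma_\Pi=\Pi\circ\gamma$. Then $\gamma_\Pi(s)\in\partial D$ and $\dot\gamma_\Pi(s)\in T_{\gamma_\Pi(s)}\partial D$ for every $s$, so \eqref{fc} genuinely gives $H_\phi(\gamma_\Pi,\dot\gamma_\Pi)[\dot\gamma_\Pi,\dot\gamma_\Pi]\le 0$. One then estimates
\[
\ddot\psi(s)=H_\phi(\gamma,\dot\gamma)[\dot\gamma,\dot\gamma]
\le H_\phi(\gamma,\dot\gamma)[\dot\gamma,\dot\gamma]-H_\phi(\gamma_\Pi,\dot\gamma_\Pi)[\dot\gamma_\Pi,\dot\gamma_\Pi],
\]
and the right-hand side is controlled using the Lipschitz dependence of the Hessian components on \emph{both} arguments together with $|\gamma-\gamma_\Pi|\le C\,\psi$ and $|\dot\gamma-\dot\gamma_\Pi|\le C(\psi+|\dot\psi|)$. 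This is where the $\psi$ term in $A(\psi+|\dot\psi|)$ really comes from---it measures the displacement of the base point to the boundary, not just the tilt of the velocity. Once you add this projection of the footpoint, your outline becomes the paper's proof.
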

\begin{proof}
Assume by contradiction that
$\dot\gamma(0)\in T_p\partial D$.
 We are going to prove that $\sigma
>0$ exists such that $\gamma([0,\sigma[)\subset \partial D$, getting a contradiction.

Without loss of generality, we also assume that $\gamma$
is parameterized with unit
Finslerian speed, i.e. $F(\gamma(s),\dot\gamma(s))=1$ for all
$s\in[0,b]$.
Take a chart $(V,(x^i)_{i=1,\ldots,n})$ of $M$
centered  at $p$, with $V\subset U$ and adapted to $D$ (i.e. function  $\phi$ defining the boundary of $D$ is given as a  coordinate $x^i$, say $\phi=x^n$).
In what follows, $|\cdot|$ denotes the
Euclidean norm on $\varphi(V)\subset\R^n$, $\nabla^0$
its
associated gradient and, with an abuse of notation, the symbols
which denote elements in $M$ or $TM$ remain unchanged for the
induced ones by means of $\varphi$ in $\R^n$ or $\R^{2n}$. In
particular,
$\nabla^0\phi \equiv \partial_{x^n}$.
Assuming also that the closure $\overline V$ is compact,
$a>0$ exists such that:
 \beq\label{controllo1} \frac{1}{a}|y|\leq
F(x,y)\leq a|y|, \quad \text{for every $(x,y)\in TV.$}  \eeq Now,
define the natural projection map on $x^n=0$. 
More precisely, let $\eta$ be a local flow around $p$, i.e. for
some $\epsilon>0$, $W=\varphi^{-1}(]-\epsilon,\epsilon[^n)\subset
V$:

$$\eta: ]-\epsilon,\epsilon[ \times W \longrightarrow V\subset M, \quad \quad \eta(t,(x^1,\dots,x^n))=(x^1,\dots,x^{n-1}, x^n-t).$$
Obviously, 
$\phi(\eta(\phi(w),w)) = 0, $ and the projection $\Pi:W
\longrightarrow \partial D$ is defined as:
$$
\Pi(w) = \eta(\phi(w),w).
$$
As $\gamma(0) \in \Pi(W)$, $\sigma  >  0$ exists  such
that $\gamma(s) \in W$ for all
 $s \in
[-\sigma,\sigma]$. Consider the projected curve $\gamma_{\Pi}:
[-\sigma,\sigma] \rightarrow \partial D$ of $\gamma$ on $\partial D$ given by
$\gamma_{\Pi}(s) = \Pi(\gamma(s))$. Since $\dot \gamma_{\Pi}(s)
\in T_{\gamma_{\Pi}(s)}\partial D$, by (\ref{fc}) we have \be\label{dis}
H_{\phi}(\gamma_{\Pi}(s),
\dot\gamma_{\Pi}(s))[\dot\gamma_{\Pi}(s),\dot\gamma_{\Pi}(s)] \leq
0, \quad \text{for every $s\in [-\sigma,\sigma]$.}\ee
Set
$\rho(s)=\phi(\gamma(s))$, it follows $\ddot \rho(s) =
H_\phi(\gamma(s),\dot\gamma(s))[\dot\gamma(s),\dot\gamma(s)]$.
Moreover
\beq\label{dotgammapi}
\dot\gamma_{\Pi}(s)=\de\Pi(\gamma(s))[\dot\gamma(s)]
=\partial_t\eta(\rho(s),\gamma(s))\dot\rho(s)+\partial_x\eta(\rho(s),\gamma(s))[\dot\gamma(s)].\eeq

Using the local expression of the Hessian
of $\phi$ (see (\ref{hessF})),  from (\ref{dis}),  we get on
$[0,\sigma]$: 
\bal\label{hij}
\ddot\rho(s)&=(H_\phi)_{ij}(\gamma(s),\dot\gamma(s))\dot\gamma^i(s)\dot\gamma^j(s)\nonumber\\
&\leq(H_\phi)_{ij}(\gamma(s),\dot\gamma(s))\dot\gamma^i(s)\dot\gamma^j(s) - (H_\phi)_{ij}(\gamma_{\Pi}(s),\dot\gamma_{\Pi}(s))\dot\gamma_{\Pi}^i(s)\dot\gamma_{\Pi}^j(s)\nonumber\\
&=(H_\phi)_{ij}(\gamma(s),\dot\gamma(s))\dot\gamma^i(s)\dot\gamma^j(s) - (H_\phi)_{ij}(\gammap(s),\dot\gamma_{\Pi}(s))\dot\gamma^i(s)\dot\gamma^j(s)\nonumber\\
&\quad+(H_\phi)_{ij}(\gammap(s),\dot\gamma_{\Pi}(s))(\dot\gamma^i(s)+\dot\gamma_{\Pi}^i(s))(\dot\gamma^j(s)-\dot\gamma_{\Pi}^j(s)).
\eal
From
\eqref{controllo1}, recalling that $\gamma$ is parameterized
with Finslerian unit speed, using the fact that the second derivatives of $\phi$ are Lipschitz functions and the $\Gamma^k_{ij}(x,y)$ are smooth
on $TW\setminus 0$, we get
\bal\label{hij1}
\lefteqn{\big [(H_\phi)_{ij}(\gamma(s),\dot\gamma(s))- (H_\phi)_{ij}(\gammap(s),\dot\gamma_{\Pi}(s))\big]\dot\gamma^i(s)\dot\gamma^j(s)}&\nonumber\\
&\leq \|H_\phi(\gamma(s),\dot\gamma(s))- H_\phi(\gammap(s),\dot\gamma_{\Pi}(s))\| a^2\nonumber\\
&\leq a_1\big(|\gamma(s)-\gammap(s)|+ |\dot\gamma(s)-\dot\gamma_{\Pi}(s)|\big),\eal
where $\|\cdot\|$ denotes the norm on the space of bounded bilinear operator on $\R^n\times\R^n$.
As $\eta$ is a
$C^{2,1}$ map, we obtain \beq\label{crho}
|\gamma(s)-\gamma_{\Pi}(s)|=|\eta(0,\gamma(s))-\eta(\rho(s),\gamma(s))|\leq
a_2\rho(s).\eeq
Moreover,  from (\ref{controllo1})
\be  \label{n1}
|  (H_\phi)_{ij}(\gamma_\Pi (s),\dot\gamma_\Pi(s))(\dot\gamma^i(s)+\dot\gamma_{\Pi}^i(s))(\dot\gamma^j(s)-\dot\gamma_{\Pi}^j(s))  | \leq a_3  |  \dot \gamma (s) - \dot\gamma_{\Pi} (s) |
\ee
and  from \eqref{dotgammapi}
\begin{eqnarray}
\lefteqn{ | \dot\gamma(s)-\dot\gamma_{\Pi}(s) | }  \nonumber \\
   &  =  & |   \partial_x\eta(0,\gamma(s))[\dot\gamma(s)]-\partial_x\eta(\rho(s),\gamma(s))
             [\dot\gamma(s)]-\partial_t\eta(\rho(s),\gamma(s))\dot\rho(s) |
  \nonumber \\
    &   \leq    &  a_4 \rho (s) + a_5 | \dot \rho (s)  |   \label{cdotrho}.
\end{eqnarray}
Thus from \eqref{hij}, \eqref{hij1},
\eqref{crho}, \eqref{n1}, \eqref{cdotrho},  we easily get
\[\ddot\rho(s)\leq a_6 (\rho(s)+   |\dot\rho(s)  | ) \]
and, since $\rho\geq 0$,
$\rho(0)=\dot\rho(0)=0$,   Lemma \ref{grnew}
implies that $\rho=0$ on $[0,\sigma]$.
\end{proof}
\bere\label{geoconvequiv}
Since infinitesimal convexity with respect to the metric $F$ is equivalent to that for the reversed
 metric $\tilde F$ (see Remark~\ref{infconvequiv}), an analogous statement holds also for geodesics of $\tilde F$
(or in other words for the reversed curves obtained from the  geodesics of $(M,F)$) assuming local infinitesimal convexity of $\partial D$ with respect to $F$.
\ere
\begin{cor}\label{cor}
If $\partial D$ is infinitesimally convex then $\partial D$ is geometrically convex.
\end{cor}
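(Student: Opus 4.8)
The plan is to deduce geometric convexity from Proposition~\ref{pA} by analyzing the \emph{last} point at which a geodesic joining two interior points can touch $\partial D$. Fix $p,q\in D$ (we may assume $p\neq q$, as otherwise the statement is trivial) and let $\gamma\colon[a,b]\to\overline D$ be a geodesic with $\gamma(a)=p$, $\gamma(b)=q$; I must show $\gamma([a,b])\subset D$. Since $D$ is a $C^{2,1}_{\rm loc}$ domain there is a global defining function $\phi$ as in \eqref{defbt}, so that $\rho:=\phi\circ\gamma$ satisfies $\rho\geq 0$ on $[a,b]$, with $\rho(s)=0$ precisely when $\gamma(s)\in\partial D$. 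I would study the contact set $S=\{s\in[a,b]:\gamma(s)\in\partial D\}=\rho^{-1}(0)$.

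First I would argue by contradiction, assuming $S\neq\emptyset$. Because $p,q\in D$, the endpoints $a,b$ do not belong to $S$, and $S$ is closed, hence compact and contained in the open interval $]a,b[$; I set $s_0=\max S$. By maximality, $\gamma(s)\in D$ for every $s\in\,]s_0,b]$, i.e. $\gamma$ enters the interior immediately after $s_0$.

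Next I would extract tangency at $s_0$. Since $\rho\geq 0$ and $\rho(s_0)=0$ with $s_0$ interior, the point $s_0$ is a minimum of $\rho$, so $\dot\rho(s_0)=\de\phi(\gamma(s_0))[\dot\gamma(s_0)]=0$; as $T_{\gamma(s_0)}\partial D=\ker\de\phi(\gamma(s_0))$ and $\dot\gamma(s_0)\neq 0$ (a geodesic joining distinct points has constant, nonzero Finslerian speed), this says exactly that $\dot\gamma(s_0)\in T_{\gamma(s_0)}\partial D$. I would then pick a neighborhood $U$ of $\gamma(s_0)$, on which infinitesimal convexity holds by hypothesis, and choose $\delta>0$ small enough that the reparametrized geodesic $s\mapsto\gamma(s_0+s)$, $s\in[0,\delta]$, stays in $U$ with $\gamma(]s_0,s_0+\delta])\subset U\cap D$. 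Proposition~\ref{pA} then forces $\dot\gamma(s_0)\notin T_{\gamma(s_0)}\partial D$, contradicting the tangency just established. Hence $S=\emptyset$ and $\gamma([a,b])\subset D$, which is geometric convexity.

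The only delicate points here are bookkeeping rather than genuine obstacles, since all the analytic difficulty is already absorbed into Proposition~\ref{pA}: one must ensure that the contact set is compact and strictly interior (so that $\max S$ is an interior minimum of $\rho$ yielding a legitimate, nonzero tangent velocity) and that the arc of $\gamma$ beyond $s_0$ lies in the interior, so that the hypotheses of Proposition~\ref{pA} are literally met. I note that, by symmetry, one could instead take $s_1=\min S$ and invoke the reversed metric $\tilde F$ via Remark~\ref{geoconvequiv}; the maximal-contact-point formulation above is preferable precisely because it lets us apply Proposition~\ref{pA} directly to $F$ without passing to $\tilde F$.
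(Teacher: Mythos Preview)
Your proof is correct and follows essentially the same approach as the paper: argue by contradiction, take the last contact point $s_0=\max S$ (the paper's $c$ with $\gamma(]c,1])\subset D$), observe that $\dot\gamma(s_0)\in T_{\gamma(s_0)}\partial D$ since $s_0$ is an interior minimum of $\phi\circ\gamma$, and contradict Proposition~\ref{pA}. Your write-up is simply more explicit about the bookkeeping (compactness of $S$, nonvanishing of $\dot\gamma$, restriction to a small arc inside $U$) that the paper leaves implicit.
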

\begin{proof}
Otherwise, a geodesic $\gamma: [0,1]\rightarrow \overline D$ with $\gamma(0), \gamma(1)\in D$ and $c\in]0,1[$ exist such that $\gamma(c)\in \partial D$ and $\gamma(]c,1])\subset D$. Necessarily, $\dot \gamma(c)\in T_p\partial D$, in contradiction with Proposition \ref{pA}.
\end{proof}
Using Proposition~\ref{pA}, the following lemma can be proved in a
standard way (cf. \cite[\S 4]{w}). We observe that differently
from  \cite{w}, where  strict convexity of the boundary is imposed,
we cannot state that the closure of $D\cap B^+(p,\delta)$ is  {\em
completely convex}.\footnote{We recall that the closure of a
subset $X$ of a Riemannian or Finsler manifold is said completely
convex if any two points in $\overline X$ can be connected by a
geodesic lying in $X$, with the possible exception of either 
one endpoint or both.} In fact infinitesimal convexity does not
exclude the case in which the points  $p_1, p_2\in \partial D\cap
B^+(p,\delta)$ are  connected by a geodesic included in $\partial
D$.
\begin{lem} \label{lA}
Assume that $\partial D$ is infinitesimally convex in a neighborhood $U$ of $p\in\partial D$. Then a small enough convex ball $B^+(p,\delta)$ exists such that $\partial D\cap B^+(p,\delta)\subset U$ and for each $p_1,p_2\in D\cap B^+(p,\delta)$ the (unique) geodesic in $B^+(p,\delta)$
 which connects $p_1$ with $p_2$ is included in $D$.
 \end{lem}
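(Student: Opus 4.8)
The plan is to realize the required ball as a small forward \emph{convex} ball and then to run a connectedness argument on the set of pairs of points of $D$ whose connecting geodesic stays inside $D$, the only possible obstruction being excluded by Proposition~\ref{pA}. First I would fix, by the standard convexity (Whitehead-type) theorem --- which applies here because the geodesic equation \eqref{eqgeo} has locally Lipschitz coefficients, the fundamental tensor being $C^{1,1}_{\rm loc}$ --- a radius $\delta>0$ so small that $B:=B^+(p,\delta)$ is a convex ball (any two of its points are joined by a unique minimizing geodesic, contained in $B$ and depending continuously on its endpoints), that $B\subset U$ (so in particular $\partial D\cap B\subset U$, as claimed), and that $D\cap B$ is connected; the last property follows, for $\delta$ small, from the local $C^{2,1}$ graph representation of $\partial D$. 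Then I consider
\[
S=\{(p_1,p_2)\in (D\cap B)\times(D\cap B)\ :\ \text{the geodesic in $B$ joining $p_1$ to $p_2$ lies in $D$}\}.
\]
Since $(D\cap B)\times(D\cap B)$ is connected and $S$ is nonempty (it contains the diagonal), it suffices to prove that $S$ is open and closed.

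Openness is immediate: if $(p_1,p_2)\in S$, the image of the connecting geodesic is a compact subset of the open set $D$, hence at positive distance from $\partial D$; by continuous dependence on the endpoints, all nearby connecting geodesics remain within that distance and so stay in $D$. The heart of the matter is closedness, and this is where Proposition~\ref{pA} is used. Let $(p_1^n,p_2^n)\to(p_1,p_2)$ with $(p_1^n,p_2^n)\in S$; by uniqueness and continuous dependence the geodesics $\gamma_n$ converge uniformly to the geodesic $\gamma\colon[0,1]\to B$ joining $p_1$ to $p_2$, and since each $\gamma_n$ lies in $D$ we get $\gamma([0,1])\subset\overline D$. It remains to exclude that $\gamma$ touches $\partial D$. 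Setting $\rho=\phi\circ\gamma\geq 0$, any interior contact point is a minimum of $\rho$; taking $c'=\sup\{s\ :\ \rho(s)=0\}$ one has $\gamma(c')\in\partial D\cap B\subset U$, $\gamma(]c',1])\subset D$, and $\dot\rho(c')=0$, i.e. $\dot\gamma(c')\in T_{\gamma(c')}\partial D$. Applying Proposition~\ref{pA} to $\gamma|_{[c',1]}$ (reparametrized so that $c'\mapsto 0$) gives $\dot\gamma(c')\notin T_{\gamma(c')}\partial D$, a contradiction --- exactly the argument of Corollary~\ref{cor}. Hence $\gamma([0,1])\subset D$ and $(p_1,p_2)\in S$.

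The connectedness argument then forces $S=(D\cap B)\times(D\cap B)$, which is the assertion. I expect the main obstacle to be precisely this closedness step. Indeed, the genuine danger is not a geodesic \emph{tangent} to $\partial D$ --- that case is killed pointwise by Proposition~\ref{pA}, in its $F$- and, via Remark~\ref{geoconvequiv}, $\tilde F$-versions --- but a geodesic crossing $\partial D$ \emph{transversally}, dipping into $M\setminus\overline D$ and returning: a transversal crossing produces no contradiction at a single point, so one cannot argue purely locally. The topological argument circumvents this, since an excursion into the exterior can only arise in the limit through a first or last contact that is necessarily tangential, and such a contact is forbidden. Two technical points require care along the way: the existence of convex balls with continuously varying unique minimizing geodesics under the weak $C^{1,1}_{\rm loc}$ regularity of $g$ (guaranteed by the Lipschitz character of \eqref{eqgeo}), and the connectedness of $D\cap B$, which follows from the local graph representation of the $C^{2,1}_{\rm loc}$ boundary once $\delta$ is small.
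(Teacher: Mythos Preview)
Your proof is correct and follows essentially the same approach as the paper: the same open--closed connectedness argument on pairs in $(D\cap B)\times(D\cap B)$, with closedness obtained via Proposition~\ref{pA} exactly as you describe. Your write-up is in fact more detailed than the paper's on the technical points (existence of convex balls under $C^{1,1}_{\rm loc}$ regularity, connectedness of $D\cap B$, the explicit $c'=\sup\{s:\rho(s)=0\}$ argument), which the paper treats as standard, citing Whitehead~\cite{w}.
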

\begin{proof}
Set $C=D\cap B^+(p,\delta)$ and let $A$ be
the set of points $(p_1,p_2)\in C\times C$ that can be connected  by a geodesic having support in $D$. We are going to prove that $A=C\times C$. Since $C$ is connected, it is enough to prove that $A$ is non-empty, open and closed in $C\times C$. Clearly each couple $(p_1,p_1)\in C\times C$ can be connected by a constant geodesic, hence $A\neq\emptyset$. If $(p_1,p_2)\in A$ and $\gamma$ is the unique geodesic connecting them and whose inner points are in $D\cap B^+(p,\delta)$, by smooth dependence of geodesics by initial conditions, we can consider two small enough neighborhoods $U_1$ and $U_2$ of $p_1$ and, respectively, $p_2$ in $C$, such that the geodesic in $B^+(p,\delta)$ connecting $\bar p_1\in U_1$ to $\bar p_2\in U_2$ lies in a small neighborhood of $\gamma$, hence its  points are contained in $D$. Now let $(p_1,p_2)\in \overline A\subset C\times C$ and consider a sequence $(p^1_n, p^2_n)_n\subset A$  converging to $(p_1,p_2)$. Up to reparametrizations, the geodesics $\gamma_n$ connecting $p^1_n$ to $p^2_n$ converge uniformly to the geodesic $\gamma$ contained in $B^+(p,\delta)$ and connecting $p_1$ to $p_2$. Thus $\gamma$ lies in $\overline D$. As the points $p_1$ and $p_2$ are in $D$, from Proposition~\ref{pA}, $\gamma$ cannot be tangent to $\partial D$ at any of its inner points.
\end{proof}
\begin{proof}[\bf Proof of Theorem~\ref{mainmain}]
Assume by contradiction that $N$  is not locally convex at $p\in N$. Denoting by $D$ the inner domain of $N$,
then a sequence of tangent vectors
$v_n\in T_{p}N$ exists such that $v_n\rightarrow 0$ and each $p_n=\exp (v_n)$ or $q_n=\widetilde\exp(v_n)$ belongs to $D\cap U$.
To fix ideas, let us assume that $\big(p_n=\exp(v_n)\big)_n$ exists such that $p_n\in D\cap U$. From Remark~\ref{geoconvequiv} the other case can be proved in the same manner.

Let $B^+(p,\delta)$  be as in Lemma~\ref{lA}.
With no loss of generality, we can assume that all $p_n$ belong to
$B^+(p,\delta)\cap D$ hence, from Lemma~\ref{lA}, each unit speed geodesic $\alpha_n:[0,b_n]\rightarrow U$ which connects $p_n$ with the first point  $p_1$
of the sequence is included in $D\cap B^+(p,\delta)$. As $B^+(p,\delta)$ is relatively compact  the sequence of curves $(\alpha_n)_n$ uniformly converges
 to a curve
$\alpha: [0,b] \rightarrow \overline D\cap \overline B^+(p,\delta)$. In fact, $\alpha$
is also a geodesic which connects
$p$ with $p_1$ and, thus, it coincides with the original geodesic
$\gamma_1$, up to a reparameterization. Since $p_1\in D\cap U$, $\gamma_1$ must definitively leave the boundary $N$ of $D$. Thus, denoting by  $s_M\in[0,b_1[$ the maximum value of the parameter such that $\gamma_1(s)\in N$,  it must be $\dot \gamma_1(s_M)\in T_{\gamma_1(s_M)}N$ and $\gamma_1(]s_M,b])\subset U\cap D$. On the other hand,  from Proposition~\ref{pA}, $\dot \gamma_1(s_M)\not\in T_{\gamma_1(s_M)}N$. This contradiction concludes the proof.
\end{proof}

\section{Convexity of a Domain}\label{s3}
In the following,  we denote by $D$  a $C^{2,1}_{\rm loc}$ domain
of a Finsler manifold $(M,F)$. Let us set, for any $p,q\in D$,
\bmln \Omega(p,q;D) = \Big\{\gam:[0,1]\rightarrow D\mid\gam\text{
absolutely  continuous, }\\  \inte h(\gam)[\dot \gam,\dot
\gam]\;\de s< + \infty, \gam(0)=p, \gam(1)=q\Big\}, \emln where
$h$ is any complete auxiliary Riemannian metric on $M$. It is well
known that $\Omega(p,q;D)$ is an infinite dimensional manifold
(cf. e.g. \cite{k}). Let us denote the function
$F^2$ by $G$; then consider the functional \be\label{J}
J\colon\Om\to\R,\quad\quad\quad J(\gam)=\frac{1}{2}\inte
G(\gam,\dot \gam)\; \de s \ee which is a $C^1$ functional with
locally Lipschitz differential (see \cite[Theorem 4.1]{m1}). A
critical point $\gamma$ of $J$ is a curve $\gamma\in\Om$ such that
$\de J(\gamma)=0$. As recalled in Section \ref{s1}, the critical
points of $J$ on $\Omega(p,q;M)$ are smooth  curves and are all
and only the geodesics of the Finsler manifold $(M,F)$ connecting
the point $p$ to the point $q$.  Thus, looking for geodesics
joining two arbitrary points $p,q\in D$ and having support in $D$,
is equivalent to seek for the critical points of $J$. From the
viewpoint of critical point theory, a natural condition to assume on $J$
would be to satisfy the so-called Palais-Smale condition. In fact,
it is proved in \cite{cys} that this condition is fulfilled when
the symmetrized balls of $(M,F)$ are compact. But in our setting,
as $D$ is an open subset of $M$, the manifold \Om is not complete
in any reasonable sense. As a consequence,  Palais-Smale sequences
may converge to  curves touching the boundary $\partial D$. In
order to overcome this difficulty, we use a {\em penalization
method}.

For any $\eps\in ]0,1]$, we consider on $\Omega(p,q;D)$
the functional
\be\label{Jeps}
J_\eps(\gam) =
J(\gam) + \inte{\frac \eps{\phi^2(\gam)}}\, \de s
\ee
where $J$ and $\phi$ were respectively introduced in (\ref{J}), (\ref{defbt}).

The presence of the penalizing term
\[\gamma\in \Om\mapsto \inte\frac{\eps}{\phi^2(\gamma)}\de s,\]
combined with the lack of regularity of $G$ on the zero section, makes the study of regularity and existence of critical points of $J_\eps$ more subtle than the one for the unpenalized functional $J$ and that of the analogous Riemannian problem.

\begin{lem}\label{loose}
For any $\eps\in ]0,1]$, let $\gam_\eps\in \Om$ be a critical point of $J_\eps$ in (\ref{Jeps}). Then $\gam_\eps$ is $C^1$ and, for any $\bar s\in [0,1]$ such that $\dot\gam_\eps(\bar s)\not=0$,
said $(U,\Phi)$ a chart of $M$ such that $\gam_\eps(\bar s)\in U$, $\gamma_\eps$ is twice differentiable on the open subset $\gam_\eps^{-1}(U)$ and there
it satisfies
\be\label{twice}
\ddot\gam_\eps^i(s) +\Gamma^i_{jk}(\gamma_\eps(s),\dot\gamma_\eps(s))\dot\gamma_\eps^j(s)\dot\gamma_\eps^k(s)= -\frac{2\eps}{\phi^3(\gamma_{\eps}(s))}\partial_{x^k}\phi(\gam_\eps(s))g^{ki}(\gamma_\eps(s),\dot\gamma_\eps(s)).
\ee
Moreover a constant $E_\eps({\gamma_{\eps}})\in\R$ exists such that
\be\label{ene}
E_\eps({\gamma_{\eps}})=\frac 1 2G(\gamma_{\eps},\dot\gamma_{\eps})-\frac{\eps}{\phi^2(\gamma_{\eps})} \quad\hbox{\rm  on } [0,1].\ee
\end{lem}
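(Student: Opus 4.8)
The plan is to establish two separate regularity and structural facts about a critical point $\gamma_\eps$ of the penalized functional $J_\eps$: first its $C^1$ regularity together with the second-order equation \eqref{twice} on the open set where the velocity does not vanish, and second the existence of the conservation law \eqref{ene}. The fundamental obstacle throughout is that $G=F^2$ is only $C^{1,1}_{\rm loc}$ (not $C^2$) and fails to be twice differentiable along the zero section, so the classical bootstrap that one uses for the Riemannian energy functional is unavailable on the whole curve; one only has good second-order information on the open subset where $\dot\gamma_\eps\neq 0$.

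First I would write down what it means for $\gamma_\eps$ to be a critical point. Since $J_\eps$ is $C^1$ with locally Lipschitz differential (the functional $J$ by \cite[Theorem 4.1]{m1}, and the penalizing term smooth because $\phi>0$ on $D$), the Euler--Lagrange equation holds in the weak (integral) form: for every admissible variation with vector field $\xi$ along $\gamma_\eps$,
\be
\inte\Big(\tfrac12\partial_{y^i}G(\gamma_\eps,\dot\gamma_\eps)\dot\xi^i+\tfrac12\partial_{x^i}G(\gamma_\eps,\dot\gamma_\eps)\xi^i-\tfrac{2\eps}{\phi^3(\gamma_\eps)}\partial_{x^i}\phi(\gamma_\eps)\xi^i\Big)\,\de s=0.
\ee
From this, by the fundamental lemma of the calculus of variations (du~Bois-Reymond form), the momentum $s\mapsto\tfrac12\partial_{y^i}G(\gamma_\eps,\dot\gamma_\eps)$ is absolutely continuous and equals an $L^\infty$ expression plus a constant. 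The $C^1$ regularity then follows by inverting the Legendre-type map: since $\partial_{y^iy^j}G=2g_{ij}$ is positive definite away from $y=0$ and the map $y\mapsto\tfrac12\partial_{y}G(x,y)$ is a homeomorphism on each fiber (it is the $0$-homogeneous-degree-one Legendre transform of the strictly convex $G$), one recovers $\dot\gamma_\eps$ continuously from its continuous momentum, giving $\gamma_\eps\in C^1$. On the open set $\{\dot\gamma_\eps\neq0\}$ the coefficients are genuinely differentiable, so one may differentiate the momentum relation and rearrange, using the definition of $G^i=\Gamma^i_{jk}y^jy^k$ recalled in \eqref{eqgeo}--\eqref{spray} and the inverse fundamental tensor $g^{ki}$, to obtain exactly \eqref{twice}.

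For the conservation law \eqref{ene} the natural route is to exploit the invariance of $J_\eps$ under reparametrization, i.e. the autonomous character of the Lagrangian. Formally the quantity $E_\eps=\tfrac12 G(\gamma_\eps,\dot\gamma_\eps)-\eps/\phi^2(\gamma_\eps)$ is the Noether energy associated with time translations; its constancy follows from testing the Euler--Lagrange equation against the variation generated by reparametrization (the vector field $\dot\gamma_\eps$ itself), using the Euler identity $\tfrac12\partial_{y^i}G\,y^i=G$ coming from the $2$-homogeneity of $G$ in $y$. Concretely I would show that $\tfrac{\de}{\de s}E_\eps=0$ in the distributional sense; on the open set where $\dot\gamma_\eps\neq0$ this is the standard computation using \eqref{twice}, while across the closed (possibly non-negligible) set where $\dot\gamma_\eps=0$ one argues that $E_\eps$ is continuous (by the $C^1$ regularity already established) and locally constant off a closed set, hence constant everywhere on $[0,1]$ by connectedness.

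The main obstacle I anticipate is precisely the handling of the set $Z=\{s:\dot\gamma_\eps(s)=0\}$, which as the authors stress need not be negligible. Equation \eqref{twice} and the pointwise differentiation of $E_\eps$ are only justified on the complement of $Z$, so one cannot simply integrate the derivative of $E_\eps$ across $Z$ in the naive way. The clean way around this is to prove constancy of $E_\eps$ by a continuity-plus-local-constancy argument rather than by integrating $\dot E_\eps$: $E_\eps$ is continuous on all of $[0,1]$, it is constant on each connected component of the open set $[0,1]\setminus Z$, and on the interior of $Z$ one has $\dot\gamma_\eps\equiv0$ so $G(\gamma_\eps,\dot\gamma_\eps)=0$ while $\gamma_\eps$ is locally constant, making $E_\eps$ constant there too; matching the (common) value across boundary points of $Z$ by continuity then forces $E_\eps$ to be globally constant. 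This circumvents any appeal to differentiating through $Z$ and is exactly where the $C^1$ regularity established in the first part does the essential work.
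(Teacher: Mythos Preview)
Your approach is essentially the same as the paper's: weak Euler--Lagrange equation, du~Bois-Reymond to get continuity of the momentum $\tfrac12\partial_yG(\gamma_\eps,\dot\gamma_\eps)$, inversion of the fiberwise Legendre map $y\mapsto\tfrac12\partial_yG(x,y)$ to obtain $\gamma_\eps\in C^1$, then the implicit function theorem on the open set $A=\{\dot\gamma_\eps\neq0\}$ to get twice differentiability and \eqref{twice}; finally, Euler's identity from $2$-homogeneity to identify the energy and show it is locally constant on $A$.

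There is, however, one genuine soft spot in your sketch of \eqref{ene}. Your ``continuity-plus-local-constancy'' argument asserts that $E_\eps$ is continuous, locally constant on each component of $[0,1]\setminus Z$ and on $\operatorname{int}Z$, and that matching across $\partial Z$ by continuity forces global constancy. In general this inference fails: a continuous function that is locally constant on a dense open set need not be constant (the Cantor staircase is locally constant off the Cantor set). Since $\partial Z$ can a priori be a Cantor-type set of positive measure, your argument as stated does not close. The missing ingredient, which the paper records explicitly, is that on $Z$ one has $G(\gamma_\eps,\dot\gamma_\eps)=0$ and hence $E_\eps(s)=-\eps/\phi^2(\gamma_\eps(s))$; this is the restriction of a $C^1$ function whose derivative $\tfrac{2\eps}{\phi^3}\partial_{x^i}\phi\,\dot\gamma_\eps^i$ vanishes on $Z$. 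Using this extra structure (together with the fact that on each component $(a,b)$ of $A$ with $a,b\in Z$ one has $E_\eps(a)=E_\eps(b)$, hence $\int_a^b(\eps/\phi^2(\gamma_\eps))'\,\de s=0$) one can show that $E_\eps$ is in fact differentiable everywhere with zero derivative, or equivalently that $E_\eps(s_2)-E_\eps(s_1)=0$ for all $s_1<s_2$. The paper compresses this into ``by standard arguments, constants must agree'', but it does isolate the key identity $E_\eps=-\eps/\phi^2(\gamma_\eps)$ on $Z$, which is exactly what your sketch lacks.
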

\begin{proof}
Let us consider a finite covering of the range of $\gamma_\eps$ made by local charts $(U_k,\Phi_k)$ of $M$ and let $(TU_k,T\Phi_k)$ be the corresponding charts of $TM$. Let us define the intervals $I_k=\gamma_\eps^{-1}(U_k)=]s_k,s_{k+1}[\subset [0,1]$.
As $\gam_\eps\in \Om$ is a critical point of $J_\eps$, if $Z\in T_{\gam_\eps}\Om$ has compact support in the interval $I_k$, we have
\bal\label{pc}
\lefteqn{\de J_\eps(\gamma_\eps)[Z]=}&\nonumber\\
&=\frac{1}{2}\int_{I_k} \big(\partial_x
G(\gamma_\eps,\dot\gamma_\eps)[Z]+\partial_yG(\gamma_\eps,\dot\gamma_\eps)[\dot
Z]\big)\, \de s - \inte{\frac {2\eps}{\phi^3(\gamma_\eps)}}\partial_x\phi(\gamma_\eps)[Z]\, \de s =0.\eal
With an abuse of notation,  in the integral  above we denoted by $G$ the function $G\circ(T\Phi_k)^{-1}$ and by $\gamma_{\eps}$ the curve $\Phi_k\circ\gamma_{\eps}$
(so that the derivatives $\partial_x G$ and $\partial_y G$ have to be intended as the corresponding derivatives in $\Phi_k(U_k)\times\R^n$ of the function $G\circ(T\Phi_k)^{-1}$).

Evaluating (\ref{pc})
on any smooth vector field $Z$ along $\gam_\eps$ with compact support on the interval
$I_k$, we get
\be\label{dubois}
\int_{I_k}\left( H
+\frac 12\partial_yG(\gamma_\eps,\dot\gamma_\eps)\right)[\dot  Z]\, \de s=0, \ee where
$H=H(s)$ is the covector field along $\gamma_\eps$ defined as
\[
H(s)=-\int_{s_k}^s\left(\frac 12\partial_x G(\gamma_\eps,\dot\gamma_\eps) - {\frac{2\eps}{\phi^3(\gamma_\eps)}}\partial_x\phi(\gamma_\eps)\right)\de t.
\]
Equation
(\ref{dubois}) implies that a constant  covector
$W\in\mathbb{R}^n$, with $n=\dim M$, exists such that
\begin{equation}\label{integraloperator}
H(s)+\frac 12\partial_yG(\gamma_\eps(s),\dot\gamma_\eps(s))=W \quad\hbox{ a.e. on } I_k;
\end{equation}
since $H$ is continuous, the function $s\in I_k\mapsto \partial_yG(\gamma_\eps,\dot\gamma_\eps)$ is also continuous.
Now fix $x\in D$ and consider  the map $\mathcal L_x\colon y\in \R^n \mapsto \partial_y G(x,y)\in R^n$.  It can be proved that
$\mathcal L_x$ is a homeomorphism of $\R^n$, hence  the function $s\in I_k\mapsto \mathcal L^{-1}_{\gamma_\eps(s)}\circ\mathcal L_{\gamma_\eps(s)}(\dot\gamma_\eps(s))=\mathcal L^{-1}_{\gamma_\eps(s)}\big(\partial_yG(\gamma_\eps(s),\dot\gamma_\eps(s))\big)=\dot\gamma_\eps(s)$ is continuous, thus $\gamma_\eps$ is a $C^1$ curve (cf. \cite[Proposition 2.3]{cym} for details).
From  (\ref{integraloperator}) and the fact that $G$ is fiberwise strictly convex, as in \cite[Proposition 4.2]{bgh}  the implicit function theorem implies  that $\gamma_\eps$ is actually twice differentiable  at each $s$ where $\dot\gamma_\eps(s)\neq 0$. Thus we infer (\ref{twice}) (recall Eqs.~ \eqref{spray}--\eqref{eqgeo}).
Let us consider now the open subsets $A_{k,\eps}= \{s\in I_k\mid\dot\gamma_\eps(s)\neq 0\}$. From \eqref{integraloperator}, we get
\[
\frac{\de}{\de s}\frac 12\partial_yG(\gamma_\eps,\dot\gamma_\eps)=\frac 12\partial_xG(\gamma_\eps,\dot\gamma_\eps) - {\frac {2\eps}{\phi^3(\gamma_\eps)}}\partial_x\phi(\gamma_\eps),
\]
and then the energy $E_{k,\eps}(\gamma_\eps)=\frac 12\partial_y G(\gamma_\eps,\dot\gamma_\eps)[\dot\gamma_\eps]-\frac 12G(\gamma_\eps,\dot\gamma_\eps) - {\frac{\eps}{\phi^2(\gamma_\eps)}}$ is constant on every connected component of $A_{k,\eps}$. As $G$ is positively homogeneous of degree $2$, from Euler's theorem:
$$E_{k,\eps}(\gamma_\eps)=\frac 12 G(\gamma_\eps,\dot\gamma_\eps)-{\frac{\eps}{\phi^2(\gamma_\eps)}}, \quad\hbox{ on each connected component of } A_{k,\eps}.$$
Since the function
$s\in I_k\mapsto \frac 12 G(\gamma_{\eps},\dot\gamma_{\eps})-\frac{\eps}{\phi^2(\gamma_{\eps})}$ is continuous,  it has to be
$E_{k,\eps}(\gamma_\eps)=-\frac{\eps}{\phi^2(\gamma_{\eps})}$ on $I_k\setminus A_{k,\eps}$. By standard arguments, constants
$E_{k,\eps}(\gamma_\eps)$ must agree, hence a real constant $E_\eps({\gamma_{\eps}})$ exists such that
(\ref{ene}) holds.
\end{proof}
To prove that functionals $J_\eps$ satisfy the Palais-Smale
condition, we adapt Gordon's lemma, (cf. e.g. \cite{b2}), which
essentially is the core of the penalization technique. We report
here the proof for the reader's convenience.
\begin{lem}\label{gordon}
Let $D$ be a domain of a Finsler manifold $(M,F)$ and assume that  $\overline B_s(p,r)\cap\overline D$ is compact, for any closed  ball $\overline B_s(p,r)\subset M$.
Let $(\gam_m)_m$ be a sequence in $\Om$
such that
\be \label{gg1}
\sup_{m\in \N}\inte G(\gam_m,\dot \gam_m)\, \de s =k^2< +\infty
\ee
for some $k>0$, and assume that $(s_m)_m$ is a sequence in $[0,1]$ such that
\[
\lim_{m\rightarrow +\infty} \phi(\gam_m(s_m)) = 0.
\]
Then
\[\lim_{m\rightarrow  +\infty}
\displaystyle{\inte{\frac 1{\phi^2(\gam_m)}}\, \de s = +\infty}.
\]
\end{lem}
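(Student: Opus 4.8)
The plan is to show that a curve of bounded energy which runs close to $\partial D$ must stay near the boundary long enough, in the appropriate weighted sense, to make the penalizing integral diverge. The key preliminary, and the heart of the matter, is to trap all the images $\gam_m([0,1])$ inside one fixed compact set, so that every geometric constant entering the estimate can be chosen uniformly in $m$. First I would exploit the energy bound \eqref{gg1}. By the Cauchy--Schwarz inequality $\ell_F(\gam_m)=\inte F(\gam_m,\dot\gam_m)\,\de s\le\big(\inte G(\gam_m,\dot\gam_m)\,\de s\big)^{1/2}\le k$, and the same estimate applied to $\gam_m|_{[0,s]}$ and to $\gam_m|_{[s,1]}$ gives $\de(p,\gam_m(s))\le k$ and $\de(\gam_m(s),q)\le k$ for every $s\in[0,1]$. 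Hence each $\gam_m$ has its image in $K:=\overline B^+(p,k)\cap\overline B^-(q,k)$, which is compact by the Heine--Borel hypothesis (Remark~\ref{sym}); moreover $\gam_m([0,1])\subset D$, so $\rho_m(s):=\phi(\gam_m(s))>0$.

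On the compact set $K$ I would compare $F$ with a fixed auxiliary Riemannian metric $h$ on $M$, with norm $\Vert\cdot\Vert_h$. Since $F$ is continuous and strictly positive on the (compact) $h$-unit sphere bundle over $K$, positive homogeneity yields a constant $\lambda>0$ with $F(x,y)\ge\lambda\Vert y\Vert_h$ for all $x\in K$; I stress that this lower bound holds despite the non-reversibility of $F$, since it only uses positivity on a compact sphere bundle. As $\phi$ is $C^1$, its differential is bounded on $K$, so there is $L>0$ such that
\[
\Big|\tfrac{\de}{\de s}\phi(\gam_m(s))\Big|=\big|\de\phi_{\gam_m(s)}[\dot\gam_m(s)]\big|\le L\,F(\gam_m(s),\dot\gam_m(s)).
\]
Squaring and integrating, this gives both the pointwise control of $|\dot\rho_m|$ and $\inte\dot\rho_m^2\,\de s\le L^2\inte G(\gam_m,\dot\gam_m)\,\de s\le L^2k^2$.

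Finally I would run the logarithmic Cauchy--Schwarz estimate. Set $c_0=\tfrac12\min\{\phi(p),\phi(q)\}>0$ and $\delta_m=\rho_m(s_m)\to0$. For $m$ large we have $\delta_m<c_0$ and, since $\rho_m(1)=\phi(q)\ge 2c_0$, the intermediate value theorem provides $t_m\in\,]s_m,1]$ with $\rho_m(t_m)=c_0$ (here I use that eventually $s_m\neq1$, as $\rho_m(s_m)\to0$ while $\phi(q)$ is a fixed positive number). Then
\[
\log\frac{c_0}{\delta_m}=\int_{s_m}^{t_m}\frac{\dot\rho_m}{\rho_m}\,\de s\le\int_{s_m}^{t_m}\frac{|\dot\rho_m|}{\rho_m}\,\de s\le\left(\int_{s_m}^{t_m}\frac{\de s}{\rho_m^2}\right)^{\!1/2}\left(\int_{s_m}^{t_m}\dot\rho_m^2\,\de s\right)^{\!1/2}\le Lk\left(\inte\frac{\de s}{\phi^2(\gam_m)}\right)^{\!1/2},
\]
whence $\inte\phi^{-2}(\gam_m)\,\de s\ge (Lk)^{-2}\big(\log(c_0/\delta_m)\big)^2\to+\infty$, which is the assertion. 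The main obstacle is precisely the first step: securing a single compact set containing all the $\gam_m([0,1])$, so that $\lambda$ and $L$ are independent of $m$. This is exactly where the compactness of $\overline B^+(p,k)\cap\overline B^-(q,k)$ (equivalently of the symmetrized balls) is indispensable, and it is also where non-reversibility must be handled, since the forward bound $\de(p,\gam_m(s))\le k$ and the backward bound $\de(\gam_m(s),q)\le k$ must be combined to place $\gam_m(s)$ in a compact region.
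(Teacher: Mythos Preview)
Your argument is correct and shares with the paper the crucial opening move: using the energy bound together with Cauchy--Schwarz to trap all the $\gamma_m$ inside a fixed compact set, so that the comparison with an auxiliary Riemannian metric furnishes uniform constants. One small imprecision: the hypothesis only gives compactness of $\overline B_s(p,r)\cap\overline D$, not of $\overline B_s(p,r)$ itself, so your set $K$ should be $\overline B^+(p,k)\cap\overline B^-(q,k)\cap\overline D$; since the curves lie in $D$ this does not affect the rest of your estimate.

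Where you diverge from the paper is in the analytic step. The paper derives a pointwise H\"older bound $\phi(\gamma_m(s))-\phi(\gamma_m(s_m))\le C_2\,(s-s_m)^{1/2}$ on $]s_m,1]$, squares it to get $\phi^{-2}(\gamma_m(s))\ge\big(2C_2^2(s-s_m)+2\phi^2(\gamma_m(s_m))\big)^{-1}$, shows $1-s_m$ is bounded away from $0$, and integrates this explicit lower bound directly, obtaining a divergent logarithm. You instead write $\log(c_0/\delta_m)=\int_{s_m}^{t_m}\dot\rho_m/\rho_m$ and apply Cauchy--Schwarz to split off $\int\rho_m^{-2}$ against the uniformly bounded $\int\dot\rho_m^2$. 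Your route is slightly slicker and avoids the intermediate pointwise inequality; the paper's route gives a more explicit quantitative lower bound on the integrand itself, which can be convenient if one later needs rates. Both ultimately rest on the same two ingredients (uniform compactness and the $H^1$ control of $\rho_m$), so the difference is one of packaging rather than of strength.
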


\begin{proof}
Observe that  the supports of the curves
$\gam_m$ are contained in the intersection $\overline B^+(p,k)\cap \overline B^-(q,k)\cap \overline D$, which is a compact subset of $\overline D$ in our assumptions (see Remark~\ref{sym}). Indeed by (\ref{gg1}), for each $s\in[0,1]$, we have
\baln
&\de(p,\gam_m(s))\leq \int_0^s F(\gam_m,\dot\gam_m)\, \de t\leq \inte F(\gam_m,\dot\gam_m)\, \de t\leq \left(\inte G(\gam_m,\dot\gam_m)\, \de t\right)^{1/2}\leq  k\\
\intertext{and likewise} &\de(\gam_m(s),q)\leq \int_s^1
F(\gam_m,\dot\gam_m)\, \de t\leq \inte F(\gam_m,\dot\gam_m)\, \de t\leq
\left(\inte G(\gam_m,\dot\gam_m)\, \de t\right)^{1/2}\leq  k. \ealn
Hence a positive constant $C_1$ exists such that
\beq\label{c1c2} \frac{1}{C_1}|y|_{h(x)}^2\leq G(x,y)\leq
C_1|y|_{h(x)}^2, \eeq for every  $x\in K$ and for every $y\in T_x
M$. From H\"older inequality and \eqref{c1c2} we have that for
each $s\in ]s_m, 1]$ \beq\label{holder}
\phi(\gam_m(s))-\phi(\gam_m(s_m))=\int_{s_m}^s
h(\gam_m)[\nabla^0\phi(\gam_m),\dot\gam_m]\, \de t\leq
C_2(s-s_m)^{1/2} \eeq where $C_2$ is a positive constant depending
on $K$, but independent from $m$. From \eqref{holder} we get
\beq\label{f2f3}
\frac{1}{\phi^2(\gam_m(s))}\geq \frac{1}{2\big(C_2^2(s-s_m)+\phi^2(\gam_m(s_m))\big)}.\eeq
Moreover from \eqref{holder}, recalling that  for each $m\in \N,\ \gam_m(1)=q$, we deduce that a positive constant $C_3$ exists such that $1-s_m\geq C_3$ for all $m\in\N$. Therefore  integrating both hand sides of \eqref{f2f3} on the interval $[s_m,1]$ we get the thesis.
\end{proof}
Let $\mathcal M$ be a Banach manifold. We recall that a $C^1$ functional $f\colon \mathcal M\to \R$ satisfies the Palais-Smale condition if every sequence $(x_m)_m$ such that $\big(f(x_m)\big)_m$ is bounded and $\de f(x_m)\to 0$, as $m\to +\infty$, admits a converging subsequence.

\begin{prop}\label{pec}
Under the assumptions of Theorem~\ref{main}, then
\begin{list}{(\roman{enumi})}{\usecounter{enumi}\labelwidth  5em
\itemsep 0pt \parsep 0pt}
\item
for any
$\eps \in]0,1]$ and for any $c \in  \R$, the sublevels
$J_{\eps}^c= \{x\in \Om\mid J_\eps(x)\leq c\}$
are complete metric subspaces of $\Om$;
\item
for any $\eps\in ]0,1]$, $J_\epsilon$ satisfies the Palais-Smale condition.
\end{list}
\end{prop}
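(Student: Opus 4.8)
The plan is to derive both statements from a single a priori confinement of the sublevels, obtained by combining the energy bound with Lemma~\ref{gordon}. Fix $\eps\in]0,1]$ and $c\in\R$; we may assume $c\geq 0$, since $J_\eps\geq 0$ makes $J_\eps^c$ empty (hence trivially complete, with Palais--Smale vacuously satisfied) otherwise. If $\gam\in J_\eps^c$, then since $J(\gam)=\frac12\inte G(\gam,\dot\gam)\,\de s\geq 0$ and the penalizing term is non-negative, one has at once $\inte G(\gam,\dot\gam)\,\de s\leq 2c$ and $\inte\frac{1}{\phi^2(\gam)}\,\de s\leq c/\eps$. Reasoning exactly as at the beginning of the proof of Lemma~\ref{gordon}, the energy bound forces the support of every $\gam\in J_\eps^c$ to lie in $K:=\overline B^+(p,\sqrt{2c})\cap\overline B^-(q,\sqrt{2c})\cap\overline D$, which is compact under the hypothesis of Theorem~\ref{main}. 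I claim moreover that there is $\delta>0$ with $\phi(\gam(s))\geq\delta$ for all $\gam\in J_\eps^c$ and all $s\in[0,1]$: otherwise a sequence $(\gam_m)_m\subset J_\eps^c$ and parameters $(s_m)_m$ would satisfy $\phi(\gam_m(s_m))\to 0$ with $\inte G(\gam_m,\dot\gam_m)\,\de s\leq 2c$, and Lemma~\ref{gordon} would give $\inte\frac{1}{\phi^2(\gam_m)}\,\de s\to+\infty$, against the uniform bound $c/\eps$. Hence every curve in $J_\eps^c$ has support in the compact set $K_\delta:=K\cap\{\phi\geq\delta\}\subset D$, uniformly bounded away from $\partial D$, and is uniformly bounded in $H^1$.

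For part (i), let $(\gam_m)_m$ be a Cauchy sequence in $J_\eps^c$ for the $H^1$ distance of \Om. Being Cauchy in $H^1$, it converges in $H^1$ to some absolutely continuous curve $\gam$, and by the Sobolev embedding $H^1\hookrightarrow C^0$ the convergence is uniform. Since the supports of the $\gam_m$ stay in the compact set $K_\delta\subset D$, so does the support of $\gam$; thus $\gam\in\Om$. On curves confined to $K_\delta$ the integrand $\frac{1}{\phi^2}$ is bounded and continuous, so $\gam\mapsto\inte\frac{\eps}{\phi^2(\gam)}\,\de s$ is continuous for the $C^0$ topology, while $J$ is continuous for the $H^1$ topology; therefore $J_\eps(\gam)=\lim_m J_\eps(\gam_m)\leq c$, i.e. $\gam\in J_\eps^c$, which proves completeness.

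For part (ii), let $(\gam_m)_m$ be a Palais--Smale sequence: $\big(J_\eps(\gam_m)\big)_m$ bounded, say by $c$, and $\de J_\eps(\gam_m)\to 0$. By the confinement above, the supports lie in $K_\delta\subset D$ and the sequence is bounded in $H^1$, so, up to a subsequence, $\gam_m\rightharpoonup\gam$ weakly in $H^1$ and $\gam_m\to\gam$ uniformly, with $\gam\in\Om$. It remains to upgrade weak to strong convergence. The plan is to evaluate $\de J_\eps(\gam_m)$ on the (coordinate) difference $\gam_m-\gam$: the condition $\de J_\eps(\gam_m)[\gam_m-\gam]\to 0$, together with the fact that the terms carrying $\partial_x G$ and the penalization $\frac{2\eps}{\phi^3}\partial_x\phi$ are controlled by $|\gam_m-\gam|\to 0$ uniformly on the compact set $K_\delta$, isolates the principal term $\inte\partial_y G(\gam_m,\dot\gam_m)[\dot\gam_m-\dot\gam]\,\de s\to 0$. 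The fiberwise strict convexity of $G$, i.e. the positive definiteness of the fundamental tensor \eqref{fundmatr}, then yields $\inte g(\gam_m,\dot\gam_m)[\dot\gam_m-\dot\gam,\dot\gam_m-\dot\gam]\,\de s\to 0$, and hence $\dot\gam_m\to\dot\gam$ in $L^2$, that is $\gam_m\to\gam$ in $H^1$.

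The main obstacle is precisely this last passage from weak to strong $H^1$ convergence, made delicate by the lack of smoothness of $G=F^2$ on the zero section: the fundamental tensor $g(x,y)$ is only controlled for $y\neq 0$ and degenerates as $y\to 0$, so the ellipticity used above cannot be invoked uniformly along the whole curve. The difficulty is handled by exploiting the fiberwise strict convexity of $G$ through the positive homogeneity of $F$, comparing $\partial_y G(\gam_m,\dot\gam_m)$ with $\partial_y G(\gam,\dot\gam)$ on the set where the velocities are bounded away from zero and treating separately the set where they are small, in the spirit of the convergence arguments of \cite{cys, cym}.
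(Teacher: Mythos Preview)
Your argument is essentially the paper's: confine the supports to a compact set via the energy bound, invoke Lemma~\ref{gordon} to keep them uniformly away from $\partial D$, and for (ii) test $\de J_\eps(\gam_m)$ on difference vectors so that the penalty and $\partial_x G$ terms disappear by uniform convergence, leaving the weak-to-strong passage to be handled by the techniques of \cite{cym}. The only notable variant is that the paper works in a natural chart $(\mathcal O_w,\exp_w^{-1})$ of $\Om$ centered at a smooth curve $w$ near the limit and uses the Cauchy-type test vectors $\de\exp_w(\xi_m)[\xi_m-\xi_n]$ (so that the ``difference'' is intrinsically defined on the path manifold), whereas you first extract a weak limit $\gam$ and test against the coordinate expression $\gam_m-\gam$; both are standard, and both proofs hand off the final convergence step---with its zero-section subtlety---to \cite[Theorem~3.1]{cym}.
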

\begin{proof}
(i) Fix $\eps\in ]0,1]$ and $c\in\R$. Let $(\gam_m)_m$ be a Cauchy sequence
in $J_\eps^c$; then it is a Cauchy sequence also in $\Omega(p,q;M)$ and it uniformly converges
to a curve $\gam$ with support in $\overline D$.
By Lemma \ref{gordon} it follows
\[
d = \inf\displaystyle{\left\{\phi(\gam_m(s)) | s\in [0,1], m \in \N\right\}} > 0.
\]
Thus $\phi(\gam(s))\geq d$ for any $s\in [0,1]$, hence
$\gam \in \Om$ and, by the continuity of $J_\eps$, $J_\eps(\gamma)\leq c$,
as required.

\noindent
(ii) We can adapt the proof of the Palais-Smale condition for the unperturbed functional $J$ on $\Omega(p,q;M)$ in  \cite[Theorem 3.1]{cym}.   Indeed, reasoning as in in the first part of the proof of Lemma~\ref{gordon} we have that, if $(\gam_m)_m$ is a Palais-Smale sequence, then a compact subset $K\subset\overline D$ containing the supports of the curves $\gam_m$ exists and \eqref{c1c2} holds. Therefore for all $s_1, s_2\in[0,1]$ a positive constant $C$ exists such that
\[\de_h(\gam_m(s_1),\gam_m(s_2))\leq \int_{s_1}^{s_2}|\dot\gam_m|_{h}\de s\leq C|s_1-s_2|^{1/2} \quad \text{for all $m\in\N$,}\]
where $\de_h$ is the distance associated to the auxiliary Riemannian metric $h$.
Hence, from the Ascoli-Arzel\`a theorem, there exists  a
subsequence,  denoted again by $(\gam_m)_m$, which uniformly
converges to a curve $\gam$ whose support is in $\overline D$. From Lemma~\ref{gordon}, $\gam([0,1])\subset D$, otherwise $J_\eps(\gam_m)\to +\infty$, in contradiction with the assumption that $(\gamma_m)_m$ is a Palais-Smale sequence.  Now consider a smooth curve $w\in\Om$ which approximates $\gamma$ and a natural  chart  $(\mathcal O_w,\exp_w^{-1})$, centered at $w$, of the manifold \Om (cf. \cite[Corollary 2.3.15]{k}). Set $\xi_m=\exp_w^{-1}(\gam_m)$.
We have
\bal\label{pseps}
\lefteqn{\de J_\eps(\gamma_m)\big[\de\exp_w(\xi_m)[\xi_m-\xi_n]\big]=}&\nonumber\\
&\de J(\gamma_m)\big[\de\exp_w(\xi_m)[\xi_m-\xi_n]\big]-2\eps\inte\frac{h(\gam_m)\big[\nabla^h\phi(\gam_m),\de\exp_w(\xi_m)[\xi_m-\xi_n]\big]}{\phi^3(\gam_m)}\, \de s.\eal
As $(\gam_m)_m$ uniformly converges to $\gam$, $\xi_m-\xi_n\to 0$ uniformly as $m,n\to+\infty$, hence the second  term in the right-hand  side of \eqref{pseps} goes to zero as $m,n\to+\infty$. Since $(\gam_m)_{m}$ is a Palais-Smale sequence, also the left-hand side goes to zero and then
\beq\label{psuneps}
\de J(\gamma_m)\big[\de\exp_w(\xi_m)[\xi_m-\xi_n]\big]\To 0,
\eeq as $m,n\to+\infty$.  From \eqref{psuneps}, the rest of the proof follows step by step that in \cite[Theorem 3.1]{cym}.
\end{proof}
\bere\label{bound}
By Proposition \ref{pec}, for any $\eps\in]0,1]$, $J_\eps$ has a minimum
point $\gamma_\eps\in\Om$; then $k>0$ exists such
that
\be\label{up}
J_\eps(\gam_\eps)\leq k\quad  \text{for all $\eps\in]0,1],$}
\ee
since $J_\eps(\gam_\eps)\leq J_\eps(\gam_1)\leq J_1(\gam_1)$.
Moreover, from (\ref{ene}) we get
$$
E_\eps(\gam_\eps) =
J_\eps(\gam_\eps) - 2\inte{\frac \eps{\phi^2(\gam_\eps)}}\, \de s \leq k
\quad \text{for all $\eps\in]0,1]$}
$$
thus
\be\label{cota}
{\frac 12}G(\gam_\eps(s),\dot\gam_\eps(s))
\leq k + {\frac\eps{\phi^2(\gam_\eps(s))}}
\quad \text{for all $\eps\in]0,1]$ and $s\in [0,1]$.}
\ee
\ere
Next we give an a priori estimate about the critical points of the functionals $J_\eps$.
\begin{lem}\label{cruciale}
Under the assumptions of Theorem~\ref{main}, let $(\gam_\eps)_{\eps \in ]0,1]}$ be a family in $\Om$ such that, for any $\eps\in ]0,1]$, $\gam_\eps$ is a critical
point of $J_\eps$ and let $k\in\R$ be such that
(\ref{up}) holds.
Then, set
$$
\lambda_\eps(s) = {\frac{2\eps}{\phi^3(\gam_\eps(s))}} \quad \text{for all $\eps\in]0,1]$ and $s\in [0,1]$,}
$$
$\eps_0\in ]0,1]$ exists such that  $(\|\lambda_\eps\|_\infty)_{\eps \in ]0,\eps_0]}$ is bounded, where
\[\|\lambda_\eps\|_\infty=\max_{s\in[0,1]}\lambda_\eps(s).\]
\end{lem}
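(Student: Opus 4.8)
The plan is to estimate $\lambda_\eps$ at a point where it is largest, i.e. at a minimizer $s_\eps\in[0,1]$ of the continuous function $\rho_\eps:=\phi(\gam_\eps)$, so that $\|\lambda_\eps\|_\infty=\lambda_\eps(s_\eps)=2\eps/\phi^3(\gam_\eps(s_\eps))$. First I would fix the geometric background: by \eqref{up} one has $\inte G(\gam_\eps,\dot\gam_\eps)\,\de s\le 2k$, so by the argument in the first part of the proof of Lemma~\ref{gordon} all the supports $\gam_\eps([0,1])$ lie in a single compact set $K\subset\overline D$, and \eqref{c1c2} holds on $K$ with $\eps$-independent constants. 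Since the fundamental tensor $g(x,y)$ and the quantity $\Gamma^k_{ij}(x,y)y^iy^j$ are positively homogeneous of degree $0$ and $2$ in $y$ respectively, it suffices to control them over the (compact) unit tangent bundle of $K$; this produces constants $C>0$ and $c>0$ with $|H_\phi(x,y)[y,y]|\le C|y|^2$ for all $(x,y)\in TK$, and $g^{ki}(x,y)\partial_{x^i}\phi(x)\,\partial_{x^k}\phi(x)\ge c$ for every $x$ in a neighborhood $\mathcal N$ of $\partial D$ in $K$ and every $y\ne 0$ (here $\de\phi\ne 0$ on $\partial D$ by \eqref{defbt} and $g$ is positive definite).

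The heart of the matter is the second-order behaviour of $\rho_\eps$ at $s_\eps$, and the main obstacle is that $\gam_\eps$ is twice differentiable only where $\dot\gam_\eps\ne 0$. I would dispose of the degenerate possibility by means of the conservation law \eqref{ene}: if $\gam_\eps$ is nonconstant but $\dot\gam_\eps(s_\eps)=0$, then \eqref{ene} gives $E_\eps(\gam_\eps)=-\eps/\phi^2(\gam_\eps(s_\eps))=-\max_s \eps/\phi^2(\gam_\eps(s))$, whence $\tfrac12 G(\gam_\eps(s),\dot\gam_\eps(s))=E_\eps(\gam_\eps)+\eps/\phi^2(\gam_\eps(s))\le 0$ for every $s$, forcing $\dot\gam_\eps\equiv 0$, a contradiction. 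Thus, discarding the trivial case of a constant $\gam_\eps$ (for which $\lambda_\eps$ is obviously bounded) and the case $s_\eps\in\{0,1\}$ (for which $\phi(\gam_\eps(s_\eps))\ge\min\{\phi(p),\phi(q)\}>0$), I may assume $s_\eps\in\,]0,1[$ and $\dot\gam_\eps(s_\eps)\ne 0$. Then $\rho_\eps$ is $C^2$ near $s_\eps$, with $\dot\rho_\eps(s_\eps)=0$ and $\ddot\rho_\eps(s_\eps)\ge 0$.

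Differentiating $\rho_\eps=\phi(\gam_\eps)$ twice, substituting the Euler--Lagrange equation \eqref{twice}, and using the local expression \eqref{hessF} of the Finslerian Hessian, I would obtain the pointwise identity
\[
\ddot\rho_\eps=H_\phi(\gam_\eps,\dot\gam_\eps)[\dot\gam_\eps,\dot\gam_\eps]-\frac{2\eps}{\phi^3(\gam_\eps)}\,g^{ki}(\gam_\eps,\dot\gam_\eps)\,\partial_{x^i}\phi(\gam_\eps)\,\partial_{x^k}\phi(\gam_\eps)
\]
wherever $\dot\gam_\eps\ne0$. Evaluating at $s_\eps$, using $\ddot\rho_\eps(s_\eps)\ge 0$, the bound $|H_\phi[\dot\gam_\eps,\dot\gam_\eps]|\le C|\dot\gam_\eps|^2\le C'\,G(\gam_\eps,\dot\gam_\eps)$, and \eqref{cota} in the form $G(\gam_\eps(s_\eps),\dot\gam_\eps(s_\eps))\le 2k+\lambda_\eps(s_\eps)\,\phi(\gam_\eps(s_\eps))$, I would arrive, whenever $\gam_\eps(s_\eps)\in\mathcal N$, at
\[
c\,\lambda_\eps(s_\eps)\le C''\bigl(2k+\lambda_\eps(s_\eps)\,\phi(\gam_\eps(s_\eps))\bigr).
\]

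Finally I would fix $\delta>0$ so small that $\{x\in K:\phi(x)\le\delta\}\subset\mathcal N$ and $C''\delta\le c/2$. If $\phi(\gam_\eps(s_\eps))\le\delta$, the last inequality absorbs the $\lambda_\eps$ term on the right and yields $\lambda_\eps(s_\eps)\le 4C''k/c$; if instead $\phi(\gam_\eps(s_\eps))>\delta$, then $\lambda_\eps(s_\eps)=2\eps/\phi^3(\gam_\eps(s_\eps))\le 2\eps/\delta^3$. In either case $\|\lambda_\eps\|_\infty$ is bounded uniformly for $\eps\in\,]0,\eps_0]$ with, say, $\eps_0=1$, which is the assertion. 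Note that this is a pure a priori estimate that does not invoke the infinitesimal convexity of $\partial D$; the only genuinely delicate point is the exclusion of $\dot\gam_\eps(s_\eps)=0$, and it is precisely here that the conservation law \eqref{ene}, rather than mere differentiability of $\gam_\eps$, does the work.
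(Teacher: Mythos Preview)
Your argument is correct, and the core estimate at a point $s_\eps$ with $\dot\gamma_\eps(s_\eps)\ne 0$ is exactly the paper's ``first case'': differentiate $\rho_\eps$, plug in \eqref{twice}, use $\ddot\rho_\eps(s_\eps)\ge 0$ together with \eqref{cota}, and absorb. Your final splitting according to whether $\phi(\gam_\eps(s_\eps))$ is below or above a threshold $\delta$ makes explicit what the paper compresses into ``and the thesis follows'' after \eqref{chiave}.

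Where you genuinely diverge from the paper is in disposing of the possibility $\dot\gamma_\eps(s_\eps)=0$. The paper treats this ``second case'' by a three-part subcase analysis on the structure of $B_\eps=\{s:\dot\gamma_\eps(s)=0\}$ near $s_\eps$: (a) $s_\eps$ isolated in $B_\eps$, handled by approximating with nearby points of $A_\eps$ where $\ddot\rho_\eps\ge 0$; (b) $s_\eps$ a non-interior accumulation point, handled by Rolle's theorem to produce points with $\ddot\rho_\eps=0$; (c) $s_\eps$ interior to $B_\eps$, ruled out by a direct variational argument testing $\de J_\eps(\gamma_\eps)$ against a field $Z$ with $\partial_x\phi(\gamma_\eps)[Z]\ge 0$. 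Your route is much shorter: if $\dot\gamma_\eps(s_\eps)=0$ then \eqref{ene} gives $E_\eps(\gamma_\eps)=-\eps/\phi^2(\gamma_\eps(s_\eps))$, which is the minimum of $-\eps/\phi^2(\gamma_\eps(\cdot))$, so $\tfrac12 G(\gamma_\eps,\dot\gamma_\eps)=E_\eps(\gamma_\eps)+\eps/\phi^2(\gamma_\eps)\le 0$ on $[0,1]$, forcing $\dot\gamma_\eps\equiv 0$. This is a clean use of the conservation law already established in Lemma~\ref{loose}, and it renders the paper's subcases (a)--(c) unnecessary. The trade-off is minimal: the paper's argument is more ``bare-hands'' in that it never appeals to \eqref{ene} at this stage, whereas yours leans on it---but since \eqref{ene} is available and its proof is independent of the present lemma, your approach is a genuine simplification.
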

\begin{proof}
Let $(\gam_\eps)_{\eps \in ]0,1]}$ be a family
of critical points of
$J_{\eps}$ satisfying (\ref{up}) and let
us set for any $\eps\in ]0,1], s \in [0,1],$
$\rho_\eps(s) = \phi(\gam_\eps(s))$ and
$\rho_\eps(s_\eps) = {\displaystyle \min_{s\in [0,1]}\rho_\eps(s)}$.
It is enough to prove the thesis when
\[\lim_{m\rightarrow +\infty}\rho_{\eps_m}(s_{\eps_m}) = 0,
\]
where $(\eps_m)_m$ is any  infinitesimal and decreasing sequence
in $]0,1]$.
Note also that, reasoning as in the first part of the proof of Lemma~\ref{gordon}, the supports of the curves $(\gam_\eps)_{\eps \in ]0,1]}$ are contained in a compact subset.

We distinguish the following two cases.

\noindent
{\em First case:}  Let $s_\eps\in A_\eps= \{s\in [0,1]\mid\dot\gamma_\eps(s)\neq 0\}$; then from Lemma~\ref{loose},
$\gamma_\eps$ is $C^2$ in a neighborhood of $s_{\eps}$ and
$\dot \rho_\eps(s_\eps)=0$, $\ddot \rho_\eps(s_\eps)\geq 0$, where
$$
\ddot \rho_\eps(s)=
\frac{\partial^2\phi}{\partial x^i\partial x^j}(\gamma_\eps(s))\dot\gamma_\eps^i(s)\dot\gamma_\eps^j(s) + \frac{\partial\phi}{\partial x^i}(\gam_\eps(s))\ddot\gamma_\eps^i(s).
$$
In this case the proof is essentially the same as for domains in a Riemannian manifold (cf. \cite{gm}).
Indeed, taking into account that (\ref{twice}) holds on a neighborhood of $s_\eps$, we get:
\bml
0\leq\ddot \rho_\eps(s_\eps)=
\frac{\partial^2\phi}{\partial x^i\partial x^j}(\gamma_\eps(s_\eps))\dot\gamma_\eps^i(s_\eps)\dot\gamma_\eps^j(s_\eps)\\
 -\frac{\partial\phi}{\partial x^i}(\gam_\eps(s_\eps))
\Gamma^i_{jk}(\gamma_\eps(s_\eps),\dot\gamma_\eps(s_\eps))\dot\gamma_\eps^j(s_\eps)\dot\gamma_\eps^k(s_\eps)\\
 -\frac{2\eps}{\phi^3(\gamma_{\eps}(s_\eps))}\frac{\partial\phi}{\partial x^k}(\gam_\eps(s_\eps)) \frac{\partial\phi}{\partial x^i}(\gam_\eps(s_\eps))g^{ki}(\gamma_\eps(s_\eps),\dot\gamma_\eps(s_\eps)).\label{ddotrho}
\eml
As the components of the Chern connection are positively homogeneous of degree $0$ with respect to $y$, $\Gamma^i_{jk}(\gamma_\eps(s_\eps),\dot\gamma_\eps(s_\eps))=
\Gamma^i_{jk}\left(\gamma_\eps(s_\eps),\frac{\dot\gamma_\eps(s_\eps)}{|\dot\gamma_\eps(s_\eps)|}\right)$, by the fact that the supports of the curves $\gamma_\eps$ are contained in a compact subset of $M$, the first two terms in the right-hand side of \eqref{ddotrho} can be bounded by $k_1G(\gam_\eps(s_\eps),\dot\gam_\eps(s_\eps))$, for a positive constant $k_1$.
Analogously, since $0$ is a regular value for $\phi$ and the matrix $[g^{ki}(x,y)]$ is positive definite, for all $(x,y)\in TM\setminus 0$, and positively homogeneous of degree $0$, a positive constant $k_2$ exists such that
\[\frac{\partial\phi}{\partial x^k}(\gam_\eps(s_\eps)) \frac{\partial\phi}{\partial x^i}(\gam_\eps(s_\eps))g^{ki}(\gamma_\eps(s_\eps),\dot\gamma_\eps(s_\eps))>k_2.\]
Hence, from \eqref{ddotrho} and (\ref{cota}) we get
\baln
0&\leq k_1
G(\gam_\eps(s_\eps),\dot\gam_\eps(s_\eps))
- k_2 {\frac{2\eps}{\phi^3(\gam_\eps(s_\eps))}}\\
&\leq k_1\left(2k + {\frac{2\eps}{\phi^2(\gam_\eps(s_\eps))}}\right)
- k_2 {\frac{2\eps}{\phi^3(\gam_\eps(s_\eps))}}
\ealn
and then
\be\label{chiave}
\frac{\eps}{\phi^3(\gam_{\eps}(s_{\eps}))}\leq
        c\left(k+\frac{\eps}{\phi^2(\gam_{\eps}(s_{\eps}))}\right)\ee
and the thesis follows.

\noindent
{\em Second case:}
Let $s_{\eps}\in B_{\eps}=[0,1]\setminus A_{\eps}$; we can further distinguish the following possibilities:
\begin{itemize}
\item [$(a)$] $s_{\eps}$ is an isolated point of $B_{\eps}$;
\item [$(b)$] $s_{\eps}$ is an accumulation point not in the interior of $B_{\eps}$;
\item [$(c)$] $s_{\eps}$ is in the interior of $B_{\eps}$.
\end{itemize}

\noindent
In case $(a)$, recalling that  $\rho_{\eps}$ is $C^1$, we have $\dot\rho_\eps(s_\eps)=0$ and a neighborhood $U(s_\eps)$ of $s_\eps$ exists such that on $U(s_\eps)$,  $\dot\rho_\eps(s)\geq 0$ for $s>s_\eps$, $\dot\rho_\eps(s)\leq 0$ for $s<s_\eps$;  hence, for each $s\in U(s_\eps)\setminus \{s_\eps\}$,  $\ddot\rho_\eps(s)$ exists and, according to the mean value theorem, $\ddot\rho_\eps(s)\geq 0$.
Let us then consider a sequence $(s_{\eps,m})_m$ in $U(s_\eps)\setminus \{s_\eps\}$ converging to $s_{\eps}$; reasoning as in the first step of the proof we get for any $m\in\N$:
\[
\frac{\eps}{\phi^3(\gam_{\eps}(s_{\eps,m}))}\leq
        c\left(k+\frac{\eps}{\phi^2(\gam_{\eps}(s_{\eps,m}))}\right)\]
and, passing to the limit, we get again  formula (\ref{chiave}) for $s_\eps$.

\noindent
In case $(b)$ there exists in $B_{\eps}$ a strictly monotone sequence, say strictly increasing, $(s^1_{\eps,m})_m$ converging to $s_{\eps}$.
We know that $\dot\rho_{\eps}(s^1_{\eps,m})=0$ for any $m\in\N$. Applying Rolle's theorem to function $\dot\rho_\eps$ on each $[s^1_{\eps,m-1}, s^1_{\eps, m}]$, we get another sequence, say $(s^2_{\eps,m})$, in $A_{\eps}$ which tends to $s_{\eps}$ and such that $\ddot\rho_\eps(s^2_{\eps,m})=0$
for any $m\in\N$. Then as in the first step we get:
\[
\frac{\eps}{\phi^3(\gam_{\eps}(s^2_{\eps,m}))}\leq
        c\left(k+\frac{\eps}{\phi^2(\gam_{\eps}(s^2_{\eps,m}))}\right)\]
and, passing to the limit, we still get formula (\ref{chiave}).

\noindent
Case $(c)$ has to be ruled out: indeed the interior of $B_{\eps}$ is empty. For if, assume that a neighborhood $U(s_\eps)$ in $B_{\eps}$ exists on which $\gamma_{\eps}$ has zero derivative.
Take then a vector $z\in T_{\gamma_{\eps}(s_{\eps})}D$ such that
$\partial_x\phi(\gamma_{\eps}(s_{\eps}))[z]>0$ and a variation vector field $Z$ such that $Z(s_{\eps})=z$, with support in  $U(s_\eps)$, in such a way that $\partial_x\phi(\gamma_{\eps})[Z]\geq 0$ on $U(s_\eps)$. As $\gamma_{\eps}$ is a critical point of $J_{\eps}$ we get a contradiction. Indeed, as $\partial_x G(\gamma_\eps,\dot\gamma_\eps)=0$ and $\partial_y G(\gamma_\eps,\dot\gamma_\eps)=0$, it follows
\baln
\lefteqn{0=\de J_\eps(\gamma_\eps)[Z]=}&\\
&=\frac{1}{2}\int_{U(s_\eps)} \big(\partial_x
G(\gamma_\eps,\dot\gamma_\eps)[Z]+\partial_yG(\gamma_\eps,\dot\gamma_\eps)[\dot
Z]\big)\, \de s - 2\eps\int_{U(s_\eps)}\frac{\partial_x\phi(\gamma_{\eps})[Z]}{\phi^3(\gamma_{\eps})}\, \de s\\
&=- 2\eps\int_{U(s_\eps)}\frac{\partial_x\phi(\gamma_{\eps})[Z]}{\phi^3(\gamma_{\eps})}\, \de s <0.
\ealn
\end{proof}
The lemma above is fundamental in order to conclude the limit process. Indeed we can state the following proposition.
\begin{prop}\label{ben}
Let $(\gam_\eps)_{\eps \in ]0,1]}$ be a family in $\Om$ such that for any $\eps\in ]0,1]$ $\gam_\eps$ is a critical
point of $J_\eps$ and let $k>0$ be such that
(\ref{up}) holds.
Then, a subsequence $(\eps_m)_m$ in $]0,1]$ exists such that
\begin{itemize}
\item [$(1)$] $(\gam_{\eps_m})_m$ strongly converges to a curve $\gam\in \Omega(p,q;M)$ whose support is contained in $\overline D$;
\item [$(2)$] $(\lambda_{\eps_m})_m$ weakly converges to $\lambda\in L^2([0,1],\R)$;
\item [$(3)$]
the limit curve $\gamma$ is $C^1$ and, for any $\bar s\in [0,1]$ such that $\dot\gam(\bar s)\not=0$,
said $(U,\Phi)$ a chart of $M$ such that $\gam(\bar s)\in U$, $\gamma$ has $H^{2,2}$-regularity on an open subset of $\gam^{-1}(U)$ containing the point $\bar s$ and there
it satisfies a.e. the equations
\be\label{weakeq}
\ddot\gam^i +\Gamma^i_{jk}(\gamma,\dot\gamma)\dot\gamma^j\dot\gamma^k= -\lambda\partial_{x^k}\phi(\gam)g^{ki}(\gam,\dot\gam).\ee
\end{itemize}
\end{prop}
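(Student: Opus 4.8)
The plan is to combine the uniform a priori bounds already available with a Legendre--transform (momentum) argument that upgrades weak $H^1$ convergence to $C^1$ convergence, thereby making the passage to the limit in the nonlinear terms legitimate.

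First I would set up the compactness. The bound (\ref{up}) gives $\inte G(\gam_\eps,\dot\gam_\eps)\,\de s\le 2k$, so, arguing as in the first lines of the proof of Lemma~\ref{gordon}, the supports of all the $\gam_\eps$ lie in a fixed compact set $K\subset\overline D$, on which $\phi$ is bounded. Writing $\frac{\eps}{\phi^2(\gam_\eps)}=\tfrac12\phi(\gam_\eps)\lambda_\eps$ and invoking Lemma~\ref{cruciale}, which bounds $\|\lambda_\eps\|_\infty$ uniformly for $\eps\le\eps_0$, the penalizing density is uniformly bounded; hence by (\ref{cota}) the quantity $G(\gam_\eps,\dot\gam_\eps)$, and therefore $|\dot\gam_\eps|$, is uniformly bounded in $\eps$ and $s$. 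Thus the $\gam_\eps$ are uniformly Lipschitz, and by Ascoli--Arzel\`a a subsequence $(\gam_{\eps_m})_m$ converges uniformly to a Lipschitz curve $\gam$ with $\gam(0)=p$, $\gam(1)=q$ and support in $K\subset\overline D$, while $\dot\gam_{\eps_m}\rightharpoonup\dot\gam$ weakly in $L^2$. Since $0\le\lambda_\eps$ is bounded in $L^\infty\subset L^2$, a further extraction yields $\lambda_{\eps_m}\rightharpoonup\lambda$ weakly in $L^2([0,1],\R)$, which is $(2)$.

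The heart of the matter --- and the main obstacle --- is upgrading the weak $H^1$ convergence to strong convergence, which is delicate precisely because each $\gam_\eps$ is only $C^1$ (not $C^2$) where its velocity vanishes, so one can neither bound $\ddot\gam_\eps$ uniformly nor pass to the limit naively in the nonlinear term $\partial_y G(\gam_\eps,\dot\gam_\eps)$. I would circumvent this by passing to the momenta. Set $p_\eps(s)=\partial_yG(\gam_\eps(s),\dot\gam_\eps(s))$; by (\ref{integraloperator}) this is $C^1$ with $\dot p_\eps=\partial_xG(\gam_\eps,\dot\gam_\eps)-2\lambda_\eps\,\partial_x\phi(\gam_\eps)$, which is uniformly bounded on $K$ thanks to the estimates above. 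Hence $(p_\eps)$ is uniformly bounded and uniformly Lipschitz, so, working chart by chart over a finite subcover of $K$ and applying Ascoli--Arzel\`a again, $p_{\eps_m}\to p$ uniformly. Now the map $\mathcal L_x\colon y\mapsto\partial_yG(x,y)$ is a homeomorphism of $\R^n$ depending continuously on $x$ with $\mathcal L_x(0)=0$ (cf.\ \cite[Proposition 2.3]{cym} and the proof of Lemma~\ref{loose}); therefore $(x,w)\mapsto\mathcal L_x^{-1}(w)$ is jointly continuous, even across $w=0$. Consequently
\[
\dot\gam_{\eps_m}(s)=\mathcal L_{\gam_{\eps_m}(s)}^{-1}\big(p_{\eps_m}(s)\big)\longrightarrow \mathcal L_{\gam(s)}^{-1}\big(p(s)\big)
\]
uniformly in $s$. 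This yields $\dot\gam_{\eps_m}\to\dot\gam$ uniformly, hence the strong $H^1$ convergence of $(1)$ and the fact that $\gam$ is $C^1$ with $\dot\gam(s)=\mathcal L_{\gam(s)}^{-1}(p(s))$.

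Finally I would derive (\ref{weakeq}). Fix $\bar s$ with $\dot\gam(\bar s)\ne0$; by the uniform convergence of the velocities there is an open interval $I\ni\bar s$ on which $|\dot\gam|\ge\delta>0$, whence $\dot\gam_{\eps_m}\ne0$ on $I$ for $m$ large, so each $\gam_{\eps_m}$ is $C^2$ there and satisfies (\ref{twice}). Since $\gam_{\eps_m}\to\gam$ and $\dot\gam_{\eps_m}\to\dot\gam$ uniformly away from the zero section, the term $\Gamma^i_{jk}(\gam_{\eps_m},\dot\gam_{\eps_m})\dot\gam_{\eps_m}^j\dot\gam_{\eps_m}^k$ and the coefficient $\partial_{x^k}\phi(\gam_{\eps_m})g^{ki}(\gam_{\eps_m},\dot\gam_{\eps_m})$ converge uniformly on $I$; multiplying the latter by $\lambda_{\eps_m}\rightharpoonup\lambda$ gives weak $L^2(I)$ convergence of the right-hand side of (\ref{twice}). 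Hence $\ddot\gam_{\eps_m}^i$ converges weakly in $L^2(I)$, which, together with $\dot\gam_{\eps_m}\to\dot\gam$ in $L^2$, shows that $\gam\in H^{2,2}$ on $I$ and that the weak limit of (\ref{twice}) is exactly (\ref{weakeq}) holding a.e.\ on $I$. As $\bar s$ and the chart were arbitrary, this establishes $(3)$.
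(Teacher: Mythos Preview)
Your argument is correct and takes a genuinely different route from the paper's. The paper handles $(1)$ by referring back to the Palais--Smale argument of Proposition~\ref{pec} (using that $\de J_\eps(\gam_\eps)=0$ to force strong $H^1$ convergence), and for $(3)$ it passes to the limit in the \emph{variational} identity: it localizes $J_{\eps_m}$ in charts, rewrites the integrands using the homogeneity of $G$ as bounded densities times $|\dot q_{mk}|$ and $|\dot q_{mk}|^2$, applies dominated convergence, and obtains the weak Euler equation for $\gam$; then it reruns the DuBois--Reymond/implicit function argument of Lemma~\ref{loose} to conclude $\gam\in C^1$ and $H^{2,2}$ where $\dot\gam\ne0$. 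Your approach instead extracts from (\ref{cota}) and Lemma~\ref{cruciale} a uniform $L^\infty$ bound on $\dot\gam_\eps$, then exploits the Legendre transform: the momenta $p_\eps=\partial_yG(\gam_\eps,\dot\gam_\eps)$ are uniformly Lipschitz by (\ref{integraloperator}), so Ascoli--Arzel\`a plus the joint continuity of $(x,w)\mapsto\mathcal L_x^{-1}(w)$ gives uniform convergence of $\dot\gam_{\eps_m}$, which simultaneously yields the strong convergence in $(1)$ and the $C^1$ regularity of $\gam$, and lets you pass to the limit directly in the pointwise ODE (\ref{twice}) rather than in its weak form. Your route is shorter and delivers a stronger mode of convergence; the paper's route stays closer to standard critical-point theory and avoids relying on the pointwise equation (\ref{twice}), working instead at the level of test functions, which may be preferable in lower-regularity situations.
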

\begin{proof}
Statements $(1)$ and $(2)$ respectively follow  by an argument analogous to that used in Proposition \ref{pec} and by Lemma  \ref{cruciale}. Let us prove \eqref{weakeq}. As observed in Section \ref{s1}, we cannot proceed as
in previous references on the topic, because therein, by the Nash  embedding theorem, the Riemannian  manifold $M$ is treated as a closed submanifold of an Euclidean space $\R^N$ (see \cite{muller} for the existence of a closed isometric embedding) and some arguments  based  on the vector space structure of  the Hilbert space $H^{1,2}([0,1],\R^N)$ are used  (cf. \cite[Lemma 4.6, Lemma 4.7]{gm}). Instead,
our proof relies on a local representation of the Lagrangian $G$ as in \cite{b1}.

Let  ${\mathcal A}=\{(V_i,\Phi_i)\}$ be  a smooth  atlas of $M$ and   $T{\mathcal A}=\{(TV_i,T\Phi_i)\}$ the corresponding atlas of $TM$.
Let us consider, for any $V_i\in \mathcal A$, the Lagrangian $G_{V_i}:\Phi_i(V_i)\times\R^n\rightarrow \R$, $G_{V_i}(q,v)=G\circ (T\Phi_i)^{-1}(q,v)$, where $n=\dim M$. Note that the Lagrangians  $G_{V_i}$ are positively homogeneous of degree $2$ with respect to the variable $v\in\R^n$. Let us consider the functionals
$$
j_{V_i}(q)=\frac12\int_{I_q}{G}_{V_i}(q(s),\dot q(s))\, \de s,
$$
where $q$ is any curve in $\Phi_i(V_i)$, namely $q\colon I_q\subset \R\to \Phi_i(V_i)$.
Then we set
\[j_{\eps_m,V_i}(q)=j_{V_i}(q)+\int_{I_q}\frac{\eps_m}{\phi_{V_i}^2(q(s))}\, \de s,\]
where $\phi_{V_i}=\phi\circ (\Phi_i)^{-1}\colon \Phi_i(V_i)\to\R$.
Now let  $\{(U_k, \Phi_k)\}$ be a finite covering of $\gamma$ such that, for each $k$, $(U_k,\Phi_k)\in\mathcal A$. Let $I_k=\gamma^{-1}(U_k)\subset [0,1]$; clearly for $m$ large enough it is also $\gam_{\eps_m}(I_k)\subset U_k$.
We have
\beq\label{localizeJ}
J_{\eps_m}(\gamma_{\eps_m})=\sum_k j_{\eps_m, U_k}(q_{mk})
\eeq
where $(q_{mk},\dot q_{mk})=T\Phi_k(\dot\gamma_{{\eps_m}|_{I_k}})$.
Let $(q_{k},Z_{k})=T\Phi_k(Z|_{I_k})$, where $Z\in T_\gam(p,q;M)$. Clearly we can  view  the vector field $s\in I_k\mapsto Z_k(s)\in \R^n$ along $q_k$ as a vector field along the curve $q_{mk}$.
Then we have
\bal
\lefteqn{\de j_{\eps_m,U_k}(q_{mk})[Z_k]=}&\nonumber\\
&=\frac 12\int_{I_k} \big(\partial_q
G_{U_k}(q_{mk},\dot q_{mk})[Z_k]+\partial_vG_{U_k}(q_{mk},\dot q_{mk})[\dot
Z_k]\big)\, \de s\nonumber\\
&\quad\quad - 2\eps_m\int_{I_k}\frac{\partial_q \phi_{U_k}(q_{mk})[Z_k]}{\phi_{U_k}^3(q_{mk})}\, \de s\nonumber\\
&=\frac 12\int_{I_k} \left(|\dot q_{mk}|^2\psi_{mk} + |\dot q_{mk}|\chi_{mk}\right)\, \de s
-\int_{I_k} {\frac{2\eps_m}{\phi_{U_k}^3(\gam_{\eps_m}(s))}}\partial_q\phi_{U_k}(q_{mk})[Z_k]\, \de s,\label{locdiff}
\eal
where for a.e. $s\in I_k$,  $\psi_{mk}= \partial_q G_{U_k}(q_{mk}, \frac{\dot q_{mk}}{|\dot q_{mk}|})[Z_k]$,
$\chi_{mk}= \partial_v G_{U_k}(q_{mk}, \frac{\dot q_{mk}}{|\dot q_{mk}|})[\dot Z_k]$.
Observe that, for each $k\in\N$, the sequences of functions $(\psi_{mk})_m$ and $(\chi_{mk})_m$ are bounded in $L^{\infty}(I_k,\R)$ and
since $q_{km}\to q_k$ in $H^1(I_k,\Phi_k(U_k))$, by
the Lebesgue dominated convergence theorem we deduce that the first integral in \eqref{locdiff} converges to
\[
\frac 12\int_{I_k} \big(\partial_q
G_{U_k}(q_k,\dot q_k)[Z_k]+\partial_vG_{U_k}(q_k,\dot q_k)[\dot
Z_k]\big)\, \de s,
\]
as $m\to+\infty$. On the other hand from the weak convergence of $(\lambda_{\eps_m})_m$ to $\lambda$ and the uniform convergence of $(q_{mk})_m$ to $q_k$, we get
\[
\int_{I_k} \lambda_{\eps_m}(s)\partial_q \phi_{U_k}(q_{mk})[Z_k]\, \de s\longrightarrow \int_{I_k} \lambda(s)\partial_q \phi_{U_k}(q_k)[Z_k]\, \de s,
\quad \text{as $m\rightarrow +\infty$.}
\]
Summing over $k$,  from \eqref{localizeJ}, we
get
\beq\label{sumdiff}
\sum_k \de j_{\eps_m,U_k}(q_{mk})[Z_k]=\de J_{\eps_m}(\gamma_m)[Z_m]=0,\eeq
where $Z_m$ is the vector field along $\gam_m$ obtained patching together the vector fields $(T\Phi_k)^{-1}(Z_k)$.
Thus, from \eqref{locdiff}--\eqref{sumdiff}
we  deduce that $\gam$ satisfies the following equation
\[\frac{1}{2}\inte \big(\partial_x
G(\gamma,\dot\gamma)[Z]+\partial_yG(\gamma,\dot\gamma)[\dot
Z]\big)\, \de s - \inte\lambda(s)\partial_x\phi(\gamma)[Z]\, \de s=0
\]
(here we use the same abuse of notation as in  the proof of Lemma~\ref{loose}).
Consider now any smooth vector field $Z$ along $\gam$ with compact support in the interval
$I_k=\gamma^{-1}(U_k)=\, ]s_k,s_{k+1}[\subset [0,1]$; then
\[
\int_{I_k}\left( H
+\frac 12\partial_yG(\gamma,\dot\gamma)\right)[\dot  Z]\, \de s=0, \] where
$H=H(s)$ is the covector field along $\gamma$ defined as
\[
H(s)=-\int_{s_k}^s\left(\frac 12\partial_x G(\gamma,\dot\gamma) - \lambda\partial_x\phi(\gamma)\right)\de t.
\]
Reasoning  as in Lemma \ref{loose}, we get that $\gamma$ is a $C^1$ curve.
Since  $\lambda\in L^2([0,1],\R)$, as in the proof of Lemma~\ref{loose},  now we get that 
$\gam$ has $H^{2,2}$-regularity on an open neighborhood of any point  $\bar s\in [0,1]$ where $\dot\gamma(\bar s)\neq 0$ and thus \eqref{weakeq} holds.
\end{proof}
\begin{proof}[\bf  Proof of Theorem \ref{main}]
The implication ``$D$ convex $\Rightarrow$ $\partial D$ convex''
is trivial by using infinitesimal convexity. In fact, if $\partial D$ is not infinitesimally convex
then the normal curvature at some point $x\in\partial D$ is  negative and the
corresponding geodesic yields an immediate contradiction.

For the converse, reasoning as in Remark \ref{bound}, we can consider a family
$(\gam_\eps)_{\eps \in ]0,1]}$ in $\Om$ such that, for any
$\eps\in ]0,1]$, $\gam_\eps$ is a minimum point of $J_\eps$ and a
constant $k>0$ such that, (\ref{up}) holds. Then, by Proposition
\ref{ben} a subsequence $(\gam_{\eps_m})$ exists converging to a
curve $\gamma\in\Omega(p,q;M)$ which satisfies (\ref{weakeq}) for
a.e.  $s\in V$, where $V$ is an open neighborhood of any point $s_0\in A_{\gamma}=\{s\in [0,1]\mid\dot\gamma(s)\neq 0\}$. Let $s_0 \in A_{\gamma}$ be such that $\gam(s_0) \in
D$; then, as $(\gamma_{\eps_m})_m$ uniformly converges to $\gam$,
$\nu\in \N$ and $\delta>0$ exist such that
$$
d= \inf \{\phi(\gam_{\eps_m}(s))\mid s \in [s_0-\delta,s_0+\delta], m \geq \nu\} > 0.
$$
Then $(\lambda_{\eps_m})_m$ uniformly converges to $0$ on $[s_0-\delta,s_0+\delta]$, where it must  be $\lambda(s)=0$ for a.e. $s$
(recall (2) of Proposition~\ref{ben}).
Let now $I$ be a measurable subset in $[0,1]$ with strictly positive measure and assume that $\gam(s)\in \partial D$ for any $s\in I$. Set $I^\ast=I\cap A_{\gamma}$.
Each $s\in  I^\ast$ is a minimum point of $\rho (s)= \phi(\gam(s))$ hence, from \eqref{weakeq}, for a.e.  $s\in I^\ast$ we have
\bmln
0\leq \ddot\rho(s) =
\frac{\partial^2\phi}{\partial x^i\partial x^j}(\gamma(s))\dot\gamma^i(s)\dot\gamma^j(s)
-\frac{\partial\phi}{\partial x^i}(\gam(s))\Gamma^i_{jk}(\gamma(s),\dot\gamma(s))\dot\gamma^j(s)\dot\gamma^k(s)\\
- \lambda(s)\frac{\partial\phi}{\partial x^i}(\gam(s))\frac{\partial\phi}{\partial x^k}(\gam(s))g^{ik}(\gam(s),\dot\gam(s)).\emln
Since $\dot\gamma(s)\in T_{\gamma(s)}\partial D$ for all $s\in I$, from (\ref{fc}), the fact that $0$ is a regular value for $\phi$ and that the matrix $[g^{ik}(x,y)]$ is positive definite, we have  $\lambda(s)=0$, for a.e. $s\in  I^\ast$. Summing up, we have proved that $\lambda(s)=0$, for a.e. $s\in A_{\gamma}$.
By (\ref{weakeq}), this means that $\gam$  is a geodesic  on each connected component of $A_\gamma$ and the function $E(\gamma)=G(\gamma(s),\dot\gamma(s))$ is constant on such connected components. Since $\gamma$ is a $C^1$ curve on $[0,1]$, such constants must agree on the whole interval $[0,1]$, therefore $A_{\gamma}=[0,1]$ and  $\gamma$ is a geodesic  joining $p,q\in D$. Finally, as the boundary is convex, the range of $\gamma$ is contained in $D$.

Moreover $D$ is convex. Indeed,  since $J$ is a continuous functional, recalling that $\gam_{\eps_m}$ is a minimum for $J_{\eps_m}$ and $(\gam_{\eps_m})_m$ converges to $\gam$ in $\Om$
(and therefore $\inf\{\phi(\gamma_{\eps_m}(s))\ |\ s\in[0,1],m\in\N\}>0$), we get
\[J(\gam)=\lim_m J(\gam_{\eps_m})\leq \lim_m J_{\eps_m}(\gam_{\eps_m})\leq \lim_m J_{\eps_m}(\bar \gam)=J(\bar\gam),\]
for any other curve $\bar\gam\in\Om$. Hence $\gamma$ is a minimum for $J$ and therefore also for the length functional ${\ell}_F$.

Now we pass to prove multiplicity of geodesics connecting the points $p$ and $q$ and having support contained in $D$, under the assumption that $D$ is not contractible. This is a quite standard application of Lusternik-Schnirelman theory and its proof is the same as in the case of a domain in a Riemannian manifold.  We observe also that such
geodesics necessarily have different supports,
except if a closed geodesic crosses the given points. We sketch the proof for the reader convenience. We recall that given a topological space $X$ the  Lusternik-Schnirelman category
 of $A\subset X$, denoted by $\cat_X(A)$,  is defined as  the minimum number of closed contractible subsets
of $X$ needed to cover $A$. By definition  $\cat_X(A)=+\infty$ if the covering cannot be realized by a finite number of subsets.
We introduce an auxiliary complete  Riemannian metric $h$ on $M$ and, using a suitable  deformation of the flow of the vector field $\frac{\nabla^h\phi}{1+|\nabla^h\phi|_h^2}$, we can construct as in \cite[Proposition 4.4.8]{mas} a $C^{1,1}$ diffeomorphism $\psi$ of $D$ onto the subset $D\setminus D_{\delta},$ with $D_\delta= \left\{x \in
D\mid \phi(x) < \delta\right\}$. We can use $\psi$
to define, by composition, a locally Lipschitz map on $\Om$ that maps
any sublevel $J^c=\{\gam\in\Om\ |\ J(\gam)\leq c\}$, $c>0$, into the intersection of another sublevel $J^{c'}$, $c'>0$,  with the set of the curves in \Om having support in $D\setminus D_{\delta}$. This is enough to get $\cat_{\Om}(J^c)<+\infty$.
By a result of E. Fadell and A. Husseini \cite{fh}, if $D$ is not contractible,  a sequence $(K_m)_m$ of compact subsets of $\Om$ exists such that, for each $m\in\N$,  $\cat_{\Om}(K_m)\geq m$; hence
for such $m$ and for every $\eps>0$
\[c_{\eps, m}=\inf_{A\in \Gamma_m}\sup_{\gam\in A} J_\eps(\gam),\]
where $\Gamma_m=\{A\subset\Om\ |\ \cat_{\Om}(A)\geq m\}$, is a real number.
Thus $c_{\eps, m}$ is a critical value of functional $J_\eps$ (see for example \cite{pal}).
Observe that for a fixed $c>0$, there must exist $m(c)\in\N$ such that, for any $A\in \Gamma_{m(c)}$, $A\cap (\Om\setminus J^c)\neq\emptyset$, otherwise $\cat_{\Om}(J^c)=+\infty$ (we recall that if $A\subset B$, then $\cat_X(A)\leq \cat_X(B)$).
Therefore, for each $\eps>0$, we have $c\leq c_{\eps,m(c)}\leq\sup_{\gam\in K_{m(c)}} J(\gam)$ and passing to the limit on $\eps\to 0$, by Proposition~\ref{ben} and the first part of this proof, we get a critical value $c_{m(c)}\geq c>0$ of $J$ and therefore a geodesic $\gam_m$ in $D$ connecting $p$ to $q$. Since $c$ was arbitrarily chosen, we obtain in this way a sequence  $(\gam_m)_m\subset \Om$ of geodesics such that,  as $m\to +\infty$, $J(\gam_m)\to +\infty$ and hence  $\ell_F(\gam_m)\to +\infty$ as well.
\end{proof}
\section*{Acknowledgment}
The authors would like to thank Lorenzo D'Ambrosio fore some fruitful conversations about inequality \eqref{avg}.

\end{document}